\documentclass{scrartcl}

\usepackage[expansion=false]{microtype}
\usepackage{url}
\usepackage{float}

\usepackage{amsmath}
\usepackage{amsthm}
\usepackage{amsfonts}
\usepackage{amssymb}

\usepackage{booktabs}
\usepackage{multirow}

\usepackage{enumitem}

\usepackage{geometry}
\usepackage[symbol,para]{footmisc}

\usepackage[numbers,sort&compress]{natbib}  

\usepackage[colorlinks=true,linkcolor=blue,citecolor=blue,urlcolor=blue]{hyperref}

\numberwithin{equation}{section}

\newtheorem{theorem}{Theorem}[section]
\newtheorem{lemma}[theorem]{Lemma}
\newtheorem{proposition}[theorem]{Proposition}
\newtheorem{corollary}[theorem]{Corollary}

\newtheorem{remark}[theorem]{Remark}

\theoremstyle{definition}

\makeatletter
\DeclareOldFontCommand{\rm}{\normalfont\rmfamily}{\mathrm}
\DeclareOldFontCommand{\sf}{\normalfont\sffamily}{\mathsf}
\DeclareOldFontCommand{\tt}{\normalfont\ttfamily}{\mathtt}
\DeclareOldFontCommand{\bf}{\normalfont\bfseries}{\mathbf}
\DeclareOldFontCommand{\it}{\normalfont\itshape}{\mathit}
\makeatother

\allowdisplaybreaks

\makeatletter
\g@addto@macro{\appendix}{%
}
\makeatother

\begin{document}

\title{On Circular Numerical Ranges of Companion Matrices with Repeated Eigenvalues}

\author{Hsin-Yi Lee\thanks{Key Laboratory of Applied Mathematics of Fujian Province University,
Putian University, Putian Fujian, 351100, China.
\href{mailto:mathlee1209@gmail.com}{mathlee1209@gmail.com}.}
\and
Wei-Qiang Huang\thanks{Center for Fundamental Science,
National Formosa University, Yunlin 632301, Taiwan.
\href{mailto:wqh@nfu.edu.tw}{wqh@nfu.edu.tw}.}}

\maketitle

\begin{abstract}
We prove that if an $n\times n\ (n > 3)$ companion matrix $A$ with the spectrum $\sigma(A) = \{ a \}$ 
has a circular numerical range, then $A$ is the Jordan block. This problem can be described by examining zeros of 
the Laurent polynomial arising from geometric properties of the numerical range. 
The difficulty is that the relevant Laurent coefficients involve both the repeated eigenvalue $a$ and the radius parameter $\lambda$, 
so direct coefficient comparison does not isolate $a$. We address this by decomposing the relevant matrix into a tridiagonal
Toeplitz part plus a rank-two update and using Chebyshev polynomials of the second kind. 
This reduction yields an explicit Laurent-coefficient formula whose vanishing under the circularity condition gives $a=0$. 
Furthermore, we extend this result when the spectrum is $\sigma(A) = \{0, a\}$ with algebraic multiplicities $n-m$ and $m$, respectively.
\end{abstract}

\newcommand{\keywords}[1]{\paragraph{Keywords:} #1}
\keywords{Numerical range, Companion matrix, Tridiagonal Toeplitz matrix, Chebyshev polynomials, Rank-two update, Binomial identities}

\newcommand{\subclass}[1]{\paragraph{Mathematics Subject Classification (2020):} #1}
\subclass{15A60, 15A18, 05A10}

\section{Introduction}\label{sec:intro}

Let $A$ be an $n\times n$ complex matrix. Its numerical range is
\[
W(A)=\{\langle Ax,x\rangle:x\in\mathbb C^n,\ \|x\|=1\}.
\]
We write $\sigma(A)$ for the spectrum of $A$ and let $\operatorname{Re} A:= (A+A^{*})/2$. 
The set $W(A)$ is a compact convex subset of the complex plane, and its boundary is determined 
by the largest eigenvalues of the Hermitian matrices $\operatorname{Re}(e^{-i\theta}A)$ as $\theta$ varies over $[0,2\pi)$; 
see, for instance, \cite{HornJohnson91,GauWu21}. 
Given a monic polynomial
\[
p(z)=z^n+a_1z^{n-1}+\cdots+a_{n-1}z+a_n,
\]
we write $A$ for the companion matrix
\begin{equation}\label{eq:companion-form}
A=
\begin{bmatrix}
0 & 1 &        &        &   \\
  & 0 & 1      &        &   \\
  &   & \ddots & \ddots &   \\
  &   &        & 0      & 1 \\
-a_n & -a_{n-1} & \cdots & -a_2 & -a_1
\end{bmatrix}.
\end{equation}
It is a standard result that both the characteristic polynomial and the minimal polynomial 
of $A$ are equal to $p(z)$. When $p(z)=(z-a)^n$, the coefficients of $p(z)$ in 
\eqref{eq:companion-form} can be expressed as
\[
a_i=(-1)^i\binom{n}{i} a^i,\qquad 1\le i\le n.
\]
In particular, $A$ is the Jordan block $J_n$ if $a=0$.

Calbeck \cite{Calbeck08} showed that a $3\times 3$ companion matrix with 
$\sigma(A)=\{a\}$ has a circular numerical range if and only if either $a=0$ or $a = \pm \sqrt{2\sqrt{3}\cos 50^{\circ}-2}$. 
Based on this argument, he also asked whether a nonzero number $a$ exists in higher dimensions. 
We answer this question by proving that no such nonzero number $a$ exists for $n>3$. 
That is, if an $n\times n$ $(n>3)$ companion matrix $A$ with $\sigma(A)=\{a\}$ has a circular 
numerical range, then $a=0$ or, equivalently, $A=J_n$ (Theorem~\ref{thm:main}). 
Thus, within this class of companion matrices, the circular shape of $W(A)$ determines the algebraic parameter $a$ completely.

Let $A$ be an $n\times n$ companion matrix with 
$\sigma(A)=\{a\}$ having a circular numerical range $W(A)$. 
By Anderson's theorem, which is based on Kippenhahn's boundary-generating curve, 
the center of a circular numerical range of a finite matrix is an eigenvalue; 
see \cite{Kippenhahn51,Wu11,CheungLi13,GauWu21}. Since $\sigma(A)=\{a\}$, the center of $W(A)$ is $a$.
In addition, drawing on the work of Gau and Wu \cite[Lemma~2.8]{GauWu04}, we may assume $a\ge 0$ throughout this paper. 
We shall use the following elementary properties of the numerical range. For every 
square matrix $B$, every $b\in\mathbb C$, and every $|\omega|=1$,
\[
W(B-bI)=W(B)-b,\qquad W(\omega B)=\omega W(B),
\qquad W(\operatorname{Re} B)=\operatorname{Re} W(B).
\]
Therefore, if $W(A)$ is a circular disk centered at $a$ with radius $\lambda>0$, then
for every $|\omega|=1$,
\[
W\bigl(\operatorname{Re}(\omega(A-aI_n))\bigr)
=\operatorname{Re} W\bigl(\omega(A-aI_n)\bigr)
=[-\lambda,\lambda].
\]
Thus $\lambda$ is the largest eigenvalue of the Hermitian matrix 
$\operatorname{Re}(\omega(A-aI_n))$, and hence
\begin{equation}\label{eq:Aw}
A_\omega:=2\bigl(\lambda I_n-\operatorname{Re}(\omega(A-aI_n))\bigr)
\end{equation}
is singular for every $|\omega|=1$. The expansion of $\det(A_\omega)$ shows that 
$\omega^{n-2}\det(A_\omega)$ is a polynomial in $\omega$ of degree at most $2n-4$. 
Since this polynomial vanishes at every point on the unit circle, it is identically zero. 
Consequently, every Laurent coefficient of $\det(A_\omega)$ is zero. This coefficient 
vanishing is the starting point for the proof of $a=0$, equivalently $A=J_n$, for $n>3$.

The same coefficient-vanishing approach also handles companion matrices with partial zero 
spectra $\sigma(A)=\{0,a\}$ (Theorem~\ref{thm:partial}).

This paper is organized as follows. 
Section~\ref{sec:framework} derives the tridiagonal Toeplitz plus rank-two decomposition and the associated Chebyshev formulas. 
Section~\ref{sec:structural} establishes the degree bounds and the vanishing of leading coefficients. 
Section~\ref{sec:proof} identifies the decisive Laurent coefficient and proves the main theorem. 
Section~\ref{sec:partial-zeros} treats the partial zero-spectrum case. 
The required binomial identities are collected in Appendix~\ref{app:identities}.

\section{Tridiagonal Toeplitz Matrix with Rank-Two Update}\label{sec:framework}

Since $A_\omega$ defined in \eqref{eq:Aw} is singular for every $|\omega|=1$, we have
\begin{equation}\label{eq:det_zero}
\det(A_\omega) = 2^n\cdot\det\bigl(\lambda I_n-\operatorname{Re}(\omega(A-aI_n))\bigr) = 0.
\end{equation}
The matrix $A_\omega$ can be expressed as a tridiagonal Toeplitz matrix with a rank-two update:
\begin{align}
A_\omega
&=\begin{bmatrix}
2\mu            & -\omega       &        &                                & a_n\overline{\omega}\\
-\overline{\omega}& 2\mu            & -\omega&                                & a_{n-1}\overline{\omega}\\
              & -\overline{\omega}& \ddots & \ddots                         & \vdots\\
              &               & \ddots & 2\mu                             & a_2\overline{\omega}-\omega\\
a_n\omega     & a_{n-1}\omega & \cdots & a_2\omega-\overline{\omega}    & 2\mu+a_1(\omega+\overline{\omega})
\end{bmatrix}\notag\\
&=T_{\mu}+
\begin{bmatrix}\alpha & e_n\end{bmatrix}
\begin{bmatrix}0 & \overline{\omega}\\ \omega & 0\end{bmatrix}
\begin{bmatrix}\alpha^{\top}\\ e_n^{\top}\end{bmatrix}.\label{eq:Td-rank2}
\end{align}
Here $T_{\mu}$ is a tridiagonal Toeplitz matrix:
\begin{equation}\label{eq:Td}
T_{\mu}=
\begin{bmatrix}
2\mu            & -\omega           &        &                    &        \\
-\overline{\omega}& 2\mu            & -\omega&                    &        \\
              & -\overline{\omega}& \ddots & \ddots             &        \\
              &                   & \ddots & 2\mu                 & -\omega\\
              &                   &        & -\overline{\omega} & 2\mu
\end{bmatrix},
\end{equation}
where $\mu = \lambda + a(\omega+\overline{\omega})/2$, $\alpha=[a_n,\ldots,a_2,a_1]^{\top}$ and $e_{n}=[0,\ldots,0,1]^{\top}$. Note that $\alpha_i = a_{n+1-i}$ for $1\le i\le n$.

To analyze \eqref{eq:det_zero} via the rank-two structure of \eqref{eq:Td-rank2}, 
we apply the rank-update determinant formula \cite{Harville97}: for any nonsingular 
$M\in\mathbb{C}^{n\times n}$, $U,V\in\mathbb{C}^{n\times 2}$ and $C\in\mathbb{C}^{2\times 2}$,
\begin{equation}\label{eq:detlemma}
   \det\bigl(M + U\,C\,V^{\top}\bigr) = \det(M) \cdot \det\bigl(I_{2} + C\,V^{\top} M^{-1} U\bigr),
\end{equation}
which requires $T_{\mu}$ (playing the role of $M$) to be nonsingular. 
We will see that $\det(T_{\mu})$ can be expressed in terms of a Chebyshev polynomial of the second kind. 
On the unit circle, we write $\omega=e^{-i\theta}$, so that $\overline{\omega}=e^{i\theta}$ and $\omega+\overline{\omega}=2\cos\theta$. 
Hence the parameter
\[
   \mu = \lambda + \frac{a}{2}(\omega + \overline{\omega}) = \lambda + a\cos\theta
\]
is real as well. 
We introduce the Chebyshev polynomials of the second kind \cite{MasonHandscomb2002}, defined by
\begin{equation}\label{eq:cheby_def}
U_n(x)=\sum_{i=0}^{\lfloor n/2\rfloor}(-1)^i\binom{n-i}{i}(2x)^{n-2i}.
\end{equation}
The classical zeros of $U_n$ are $\mu = \cos(k\pi/(n+1))$ for $k = 1, \ldots, n$ \cite{MasonHandscomb2002}, 
all lying in $(-1, 1)$. Hence $T_{\mu}$ is singular precisely at those $\omega = e^{-i\theta}$ with $|\omega|=1$ satisfying
\begin{equation*}
  \lambda + a\cos\theta = \cos\frac{k\pi}{n+1}
  \quad\text{for some } k \in \{1,\ldots,n\}.
\end{equation*}
For $a \neq 0$, this equation has at most $2n$ solutions for $\theta \in [0,2\pi)$,
so $\{\omega : |\omega|=1,\ \det T_{\mu} \ne 0\}$ is the complement of a finite set.
The proofs in Sections~\ref{sec:proof}--\ref{sec:partial-zeros} proceed under the 
assumption $a > 0$ (by the rotation reduction of Section~\ref{sec:intro}), so this finiteness condition holds.
 
We first exclude the finitely many $\omega$ on the unit circle at which $T_\mu$ is singular, 
so that $T_\mu^{-1}$ exists 
(the formula is extended to all $|\omega|=1$ in Remark~\ref{rem:formula-extends} below). 
Combining \eqref{eq:detlemma} with the rank-two decomposition \eqref{eq:Td-rank2} 
then gives the following closed form for $\det(A_\omega)$:

\begin{align}
\det(A_\omega)
=&\ \det\left( T_{\mu} +  \begin{bmatrix} \alpha & e_{n}\end{bmatrix}
                        \begin{bmatrix} 0 & \overline{\omega } \\ \omega & 0\end{bmatrix}
                        \begin{bmatrix} \alpha^\top \\ e_{n}^\top\end{bmatrix}\right)  \notag \\
=&\ \det(T_{\mu})\cdot \det\left(I_{2} + 
                        \begin{bmatrix} 0 & \overline{\omega } \\ \omega & 0\end{bmatrix}\begin{bmatrix} \alpha^\top \\ e_{n}^\top\end{bmatrix} T_{\mu}^{-1}\begin{bmatrix} \alpha & e_{n}\end{bmatrix}\right)  \notag \\
=&\ \det(T_{\mu})\cdot (-1)\cdot \det\left(\begin{bmatrix} 0 & \overline{\omega } \\ \omega & 0\end{bmatrix} + 
                        \begin{bmatrix} \alpha^\top \\ e_{n}^\top\end{bmatrix} T_{\mu}^{-1}\begin{bmatrix} \alpha & e_{n}\end{bmatrix}\right)  \notag \\
=&\ (-\det(T_{\mu})) \cdot \det\left(\begin{bmatrix} \alpha ^{\top}T_{\mu}^{-1}\alpha & \overline{\omega}+\alpha ^{\top}T_{\mu}^{-1}e_{n}\\ 
                                             \omega +e_{n}^{\top}T_{\mu}^{-1}\alpha & e_{n}^{\top}T_{\mu}^{-1}e_{n}
                           \end{bmatrix}\right) \notag\\
=& \ \det(T_{\mu}) + \det(T_{\mu})(\alpha^{\top}T_{\mu}^{-1}e_{n}) \omega + 
     \det(T_{\mu})(e_{n}^{\top}T_{\mu}^{-1}\alpha)\overline{\omega} \notag\\
& \qquad\qquad - \det(T_{\mu})\Big((\alpha^{\top}T_{\mu}^{-1}\alpha)(e_{n}^{\top}T_{\mu}^{-1}e_{n}) - (\alpha^{\top}T_{\mu}^{-1}e_{n})(e_{n}^{\top}T_{\mu}^{-1}\alpha)\Big).  \label{eq:detA_four_terms}
\end{align}

To simplify notation, we denote
\begin{equation*}
u_n:=\det(T_{\mu}) \quad\text{and}\quad H_{\mu}:=u_n\,T_{\mu}^{-1},
\end{equation*}
representing the determinant of $T_{\mu}$ and its inverse scaled by that determinant. 
In the next two subsections, we will derive closed-form expressions for $u_n$ and $H_{\mu}$. 

\subsection[The Determinant of Tmu]{The Determinant of \texorpdfstring{$T_{\mu}$}{T_mu}}

We now show that $\det(T_{\mu})$ equals $U_n(\mu)$.

\begin{lemma}\label{lem:detTd}
The determinant of the $n\times n$ tridiagonal matrix $T_{\mu}$ equals the $n$th Chebyshev polynomial of the second kind:
\[
\det(T_{\mu})=u_n=U_n(\mu).
\]
\end{lemma}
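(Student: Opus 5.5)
We prove the identity by expanding along the last row and obtaining a three-term recurrence. Let $D_k$ denote the determinant of the $k\times k$ leading principal submatrix of $T_\mu$. This submatrix is again tridiagonal Toeplitz, with diagonal $2\mu$, superdiagonal $-\omega$ and subdiagonal $-\overline{\omega}$. Cofactor expansion of $D_k$ along its last row (equivalently, the standard continuant expansion) gives
\[
D_k = 2\mu\,D_{k-1} - (-\omega)(-\overline{\omega})\,D_{k-2} = 2\mu\,D_{k-1} - \omega\overline{\omega}\,D_{k-2},\qquad k\ge 2,
\]
with initial values $D_0=1$ and $D_1=2\mu$. Since $|\omega|=1$, we have $\omega\overline{\omega}=1$, so the recurrence becomes $D_k=2\mu D_{k-1}-D_{k-2}$.

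Next we check that $U_k(\mu)$, defined by \eqref{eq:cheby_def}, satisfies the same recurrence and initial values. Directly from \eqref{eq:cheby_def}, $U_0=1$ and $U_1(x)=2x$. For the recurrence $U_k(x)=2xU_{k-1}(x)-U_{k-2}(x)$, we compare the coefficients of $(2x)^{k-2i}$ on both sides. The right-hand side contributes $(-1)^i\binom{k-1-i}{i}$ from the first term and $(-1)^{i}\binom{k-1-i}{i-1}$ from the second term, after reindexing $i\mapsto i-1$ in the sum for $U_{k-2}$. Pascal's rule gives $\binom{k-1-i}{i}+\binom{k-1-i}{i-1}=\binom{k-i}{i}$, which is the coefficient on the left. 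Induction on $k$ then yields $D_k=U_k(\mu)$ for all $k$, and in particular $\det(T_\mu)=D_n=U_n(\mu)=u_n$.

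The main obstacle is the boundary terms in the Pascal's-rule step, not the recurrence itself. These are $i=0$ and $i=\lfloor k/2\rfloor$ for even and odd $k$, where one binomial coefficient vanishes or the summation ranges differ. We handle them by adopting the convention that $\binom{m}{j}=0$ for $j<0$ or $j>m$, so that all three sums can be taken over the same index range.

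We also note that the identity $\det(T_\mu)=U_n(\mu)$ as written uses $\omega\overline{\omega}=1$ and therefore holds on the unit circle, which is the setting of \eqref{eq:det_zero}. For general $\omega$ the same argument gives $\det(T_\mu)=(\omega\overline{\omega})^{n/2}U_n\bigl(\mu/\sqrt{\omega\overline{\omega}}\bigr)$.
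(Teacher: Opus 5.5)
Your proof is correct and takes essentially the paper's route: cofactor expansion along the last row gives $D_k=2\mu D_{k-1}-\omega\overline{\omega}\,D_{k-2}$ with $\omega\overline{\omega}=1$, and the solution of this recurrence is then identified with $U_k(\mu)$. The only difference is that the paper makes this identification by citing the Chen--Louck formula for the $n$th power of $\left(\begin{smallmatrix}2\mu&-1\\1&0\end{smallmatrix}\right)$, whereas you check directly via Pascal's rule that the explicit sum \eqref{eq:cheby_def} satisfies the recurrence; your version is more self-contained and states the use of $|\omega|=1$, which the paper leaves implicit.
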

\begin{proof}
Let $u_k$ denote the determinant of the $k\times k$ leading principal submatrix of $T_{\mu}$ in \eqref{eq:Td}, with initial values $u_0=1$ and $u_1=2\mu$. Cofactor expansion along the last row yields the three-term recurrence
\[
u_k=2\mu\cdot u_{k-1}-u_{k-2},\qquad k\ge 2,
\]
which in matrix form gives
\[
\begin{bmatrix}u_n\\u_{n-1}\end{bmatrix}
=\begin{bmatrix}2\mu & -1\\ 1 & 0\end{bmatrix}^n
\begin{bmatrix}1\\0\end{bmatrix}.
\]
By \cite{ChenLouck96}, the $n$th power of the $2\times 2$ companion matrix 
$\left(\begin{smallmatrix} 2\mu & -1 \\ 1 & 0 \end{smallmatrix}\right)$ has entries that are Chebyshev polynomials of the second kind:
\[
\begin{bmatrix}2\mu & -1\\ 1 & 0\end{bmatrix}^n=\begin{bmatrix}U_n(\mu) & -U_{n-1}(\mu)\\ U_{n-1}(\mu) & -U_{n-2}(\mu)\end{bmatrix}.
\]
In particular, $\det(T_{\mu}) = u_n = U_n(\mu)$. \qedhere
\end{proof}

Substituting $\mu=\lambda+a(\omega+\overline{\omega})/2$ into $U_{n}(\mu)$ yields the explicit Laurent expansion.

\begin{corollary}\label{cor:un}
The determinant $\det(T_{\mu}) = u_{n}$ has the expansion
\begin{equation}\label{eq:un-expansion}
u_n = \sum_{i=0}^{\lfloor n/2\rfloor}\sum_{j=0}^{n-2i}\sum_{k=0}^{n-2i-j}(-1)^i\binom{n-i}{i,j,k,n-2i-j-k}(2\lambda)^j a^{n-2i-j}\omega^{n-2i-j-2k},
\end{equation}
where $$\binom{n-i}{i,j,k,n-2i-j-k} = \frac{(n-i)!}{i!\ j!\ k!\ (n-2i-j-k)!}$$ is the multinomial coefficient 
with $0\le i\le\lfloor n/2\rfloor$, $0\le j\le n-2i$, and $0\le k\le n-2i-j$.
\end{corollary}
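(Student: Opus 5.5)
We start from Lemma~\ref{lem:detTd}, which gives $u_n=U_n(\mu)$, and from the explicit sum \eqref{eq:cheby_def}:
\[
u_n=\sum_{i=0}^{\lfloor n/2\rfloor}(-1)^i\binom{n-i}{i}(2\mu)^{n-2i}.
\]
The plan is to substitute $2\mu=2\lambda+a\omega+a\overline{\omega}$ and expand each power $(2\mu)^{n-2i}$ with the trinomial theorem. On the unit circle we use $\overline{\omega}=\omega^{-1}$, so every term becomes a Laurent monomial in $\omega$. All of this is formal algebra, so no restriction on $\omega$ is needed.

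For fixed $i$, put $m=n-2i$. The trinomial theorem gives
\[
(2\lambda+a\omega+a\omega^{-1})^{m}=\sum_{j+l+k=m}\frac{m!}{j!\,l!\,k!}(2\lambda)^j(a\omega)^l(a\omega^{-1})^k .
\]
Here $j$ counts the factors $2\lambda$, $l$ the factors $a\omega$, and $k$ the factors $a\overline{\omega}$. We eliminate $l=m-j-k$ and let $j$ run over $0,\dots,m$ and $k$ over $0,\dots,m-j$. Then:
\begin{itemize}
\item the power of $a$ is $l+k=n-2i-j$;
\item the power of $\omega$ is $l-k=n-2i-j-2k$.
\end{itemize}
These are exactly the exponents appearing in \eqref{eq:un-expansion}.

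It remains to combine the coefficients. We need
\[
\binom{n-i}{i}\cdot\frac{(n-2i)!}{j!\,k!\,(n-2i-j-k)!}=\frac{(n-i)!}{i!\,(n-2i)!}\cdot\frac{(n-2i)!}{j!\,k!\,(n-2i-j-k)!}=\frac{(n-i)!}{i!\,j!\,k!\,(n-2i-j-k)!},
\]
because the factors $(n-2i)!$ cancel. The right-hand side is the stated multinomial coefficient, since its lower indices $i+j+k+(n-2i-j-k)$ sum to $n-i$. Multiplying by the sign $(-1)^i$ and summing over $i$ gives \eqref{eq:un-expansion}.

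The only real bookkeeping concern is the correspondence between $k$ and the two directions $\omega$ and $\overline{\omega}$. Because the expression is symmetric under $\omega\leftrightarrow\overline{\omega}$, either labeling gives the same total sum. We fix $k$ as the exponent of $\overline{\omega}$ so that the $\omega$-exponent is $n-2i-j-2k$, as stated. No other obstacle arises.
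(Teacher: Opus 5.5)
Your proposal is correct and is essentially the paper's proof. The paper expands $(2\lambda+a(\omega+\overline{\omega}))^{n-2i}$ by two successive binomial expansions, first in $2\lambda$ and then in $\omega+\overline{\omega}$ with $k$ as the exponent of $\overline{\omega}$. That is your trinomial expansion done in two steps, and the paper then collapses $\binom{n-i}{i}\binom{n-2i}{j}\binom{n-2i-j}{k}$ into the same multinomial coefficient you obtain.
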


\begin{proof}
According to Lemma~\ref{lem:detTd}, the definition \eqref{eq:cheby_def} and 
the value $\mu=\lambda + a(\omega +\overline{\omega })/2$, we have
\begin{align*}
\det (T_{\mu}) &=U_{n}(\mu) \\
&=\sum_{i=0}^{\lfloor n/2\rfloor}(-1)^{i}\tbinom{n-i}{i}( 2\mu)^{n-2i} \\
&=\sum_{i=0}^{\lfloor n/2\rfloor}(-1)^{i}\tbinom{n-i}{i}( 2\lambda + a(\omega +\overline{\omega })) ^{n-2i} \\
&=\sum_{i=0}^{\lfloor n/2\rfloor}(-1)^{i}\tbinom{n-i}{i}\left(\sum_{j=0}^{n-2i}\tbinom{n-2i}{j}(2\lambda)^{j}a^{n-2i-j}(\omega +\overline{\omega })^{n-2i-j}\right) \\
&=\sum_{i=0}^{\lfloor n/2\rfloor}(-1)^{i}\tbinom{n-i}{i}\left(\sum_{j=0}^{n-2i}\tbinom{n-2i}{j}(2\lambda)^{j}a^{n-2i-j}\left( \sum_{k=0}^{n-2i-j}\tbinom{n-2i-j}{k}\omega^{n-2i-j-k}\overline{\omega }^{k}\right)\right) \\
&=\sum_{i=0}^{\lfloor n/2\rfloor}\sum_{j=0}^{n-2i}\sum_{k=0}^{n-2i-j}(-1)^{i}\tbinom{n-i}{i}\tbinom{n-2i}{j}\tbinom{n-2i-j}{k}(2\lambda)^{j}a^{n-2i-j}\omega ^{n-2i-j-2k} \\
&=\sum_{i=0}^{\lfloor n/2\rfloor}\sum_{j=0}^{n-2i}\sum_{k=0}^{n-2i-j}(-1)^{i}\frac{(n-i)!}{i!j!k!(n-2i-j-k)!}(2\lambda)^{j}a^{n-2i-j}\omega ^{n-2i-j-2k} \\
&=\sum_{i=0}^{\lfloor n/2\rfloor}\sum_{j=0}^{n-2i}\sum_{k=0}^{n-2i-j}(-1)^{i}\binom{n-i}{i,j,k,n-2i-j-k}(2\lambda)^{j}a^{n-2i-j}\omega^{n-2i-j-2k}
\end{align*}
where $$\binom{n-i}{i,j,k,n-2i-j-k} = \frac{(n-i)!}{i!\ j!\ k!\ (n-2i-j-k)!}$$ is the multinomial coefficient and 
$i,j,k$ are nonnegative integers satisfying $0\leq i\leq \lfloor n/2\rfloor $, $0\leq j\leq
n-2i $ and $0\leq k\leq n-2i-j$. \qedhere
\end{proof}

\begin{corollary}\label{cor:un_leading_coeff}
The coefficients of $\omega^s$ and $\omega^{s-1}$ in $u_s$ are respectively given by $a^s$ and $2s\lambda a^{s-1}$ for all $1 \le s \le n$.
\end{corollary}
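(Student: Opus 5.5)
Here $u_s$ denotes $U_s(\mu)$, the determinant of the $s\times s$ leading principal submatrix of $T_\mu$. It is a Laurent polynomial in $\omega$, since $\mu=\lambda+\tfrac a2(\omega+\overline\omega)$ and $\overline\omega=\omega^{-1}$ on the unit circle. I would apply Corollary~\ref{cor:un} with $n$ replaced by $s$; its derivation uses only the definition \eqref{eq:cheby_def} and Lemma~\ref{lem:detTd}, both valid for every $s\ge 1$. This gives
\[
u_s=\sum_{i=0}^{\lfloor s/2\rfloor}\sum_{j=0}^{s-2i}\sum_{k=0}^{s-2i-j}(-1)^i\binom{s-i}{i,j,k,s-2i-j-k}(2\lambda)^j a^{s-2i-j}\omega^{s-2i-j-2k}.
\]
The plan is to find exactly which index triples $(i,j,k)$ produce the exponents $s$ and $s-1$.

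The exponent is $s-2i-j-2k$, and all of $i,j,k$ are nonnegative. For the exponent to equal $s$ we need $2i+j+2k=0$, so $i=j=k=0$. The multinomial coefficient is then $\binom{s}{0,0,0,s}=1$, and the term is $a^s\omega^s$. So the coefficient of $\omega^s$ is $a^s$, and no higher power of $\omega$ appears.

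For the exponent to equal $s-1$ we need $2i+j+2k=1$. Parity forces $j=1$ and $i=k=0$, which requires $s\ge 1$. The coefficient is then
\[
\binom{s}{0,1,0,s-1}=\frac{s!}{1!\,(s-1)!}=s,
\]
and the term is $s\,(2\lambda)\,a^{s-1}\omega^{s-1}=2s\lambda a^{s-1}\omega^{s-1}$.

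There is no real obstacle. The only points needing care are that the corollary is applied at size $s$ rather than $n$, and that the case $s=1$ is consistent with $u_1=2\mu=2\lambda+a\omega+a\overline\omega$: the coefficients of $\omega$ and $\omega^0$ are $a$ and $2\lambda$, as claimed.

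As a sanity check, the same result follows without the multinomial expansion. The top-degree part of $U_s(\mu)$ is $(2\mu)^s+O(\mu^{s-2})$, because $U_s$ has no $\mu^{s-1}$ term. Expanding $(2\lambda+a\omega+a\omega^{-1})^s$ and keeping only the powers $\omega^s$ and $\omega^{s-1}$ gives $a^s\omega^s+s(2\lambda)a^{s-1}\omega^{s-1}$. The lower terms, of degree at most $s-2$ in $\mu$, contribute only powers of $\omega$ at most $s-2$.
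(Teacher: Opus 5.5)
Your proposal is correct and matches the paper's proof: apply the expansion of Corollary~\ref{cor:un} at size $s$, note that the exponent $s-2i-j-2k$ equals $s$ or $s-1$ only for the triples $(0,0,0)$ and $(0,1,0)$, and read off $a^s$ and $2s\lambda a^{s-1}$. Your alternative check is also valid: $U_s$ has no $\mu^{s-1}$ term, so only $(2\mu)^s$ contributes to the powers $\omega^s$ and $\omega^{s-1}$.
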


\begin{proof}
According to Corollary~\ref{cor:un}, the contribution of $u_s = U_s(\lambda + a(\omega + \overline{\omega})/2)$ 
to the coefficient of $\omega^p$ comes from triples $(i,j,k)$ with $2i+j+2k = s-p$. 
For $1\le s\le n$, setting $p=s$ and $p=s-1$ leads to $(i,j,k) = (0,0,0)$ and $(0,1,0)$ respectively. 
Hence, in $u_s$, the coefficients of $\omega^s$ and $\omega^{s-1}$ are $a^s$ and $2s\lambda a^{s-1}$, respectively.\qedhere
\end{proof}

\subsection{The Inverse of \texorpdfstring{$T_{\mu}$}{T_mu}}

The inverse of $T_{\mu}$ is obtained from the tridiagonal inverse formula proposed by da Fonseca and Petronilho \cite{daFonPetro01}.

\begin{lemma}\label{lem:invTd}
The inverse of the tridiagonal matrix $T_{\mu}$ is
\[
T_{\mu}^{-1}=\frac{1}{u_n}H_{\mu},
\]
where $H_{\mu}$ is the centro-Hermitian matrix\footnote{$H_{\mu}$ satisfies $\mathcal{J}H_{\mu}\mathcal{J}=\overline{H}_{\mu}$, 
where $\mathcal{J}$ is the $n\times n$ exchange matrix.}
\begin{equation}\label{eq:Hd}
H_\mu=\begin{bmatrix}
u_{n-1}                    & u_{n-2}\omega                    & u_{n-3}\omega^2                  & \cdots & u_2\omega^{n-3}                  & u_1\omega^{n-2}                  & \omega^{n-1}\\
u_{n-2}\overline{\omega}   & u_{1}u_{n-2}                     & u_{1}u_{n-3}\omega               & \cdots & u_{1}u_{2}\omega^{n-4}           & u_{1}u_{1}\omega^{n-3}           & u_1\omega^{n-2}\\
u_{n-3}\overline{\omega}^2 & u_{1}u_{n-3}\overline{\omega}                     & u_{2}u_{n-3}                     & \cdots & u_{2}u_{2}\omega^{n-5}           & u_{2}u_{1}\omega^{n-4}           & u_2\omega^{n-3}\\
\vdots                     & \vdots                           & \vdots                           & \ddots & \vdots                           & \vdots                           & \vdots\\
u_2\overline{\omega}^{n-3} & u_{1}u_{2}\overline{\omega}^{n-4} & u_{2}u_{2}\overline{\omega}^{n-5} & \cdots & u_{n-3}u_2                       & u_{n-3}u_1\omega                 & u_{n-3}\omega^2\\
u_1\overline{\omega}^{n-2} & u_{1}u_{1}\overline{\omega}^{n-3} & u_{2}u_{1}\overline{\omega}^{n-4} & \cdots & u_{n-3}u_1\overline{\omega}      & u_{n-2}u_1                       & u_{n-2}\omega\\
\overline{\omega}^{n-1}    & u_1\overline{\omega}^{n-2}        & u_2\overline{\omega}^{n-3}        & \cdots & u_{n-3}\overline{\omega}^2        & u_{n-2}\overline{\omega}         & u_{n-1}
\end{bmatrix}.
\end{equation}
Here, the elements are expressed in terms of the second-kind Chebyshev polynomials, where $u_k = U_k(\mu)$ for $k \ge 1$ and $u_0 = 1$.
\end{lemma}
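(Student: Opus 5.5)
The plan is to verify directly that $T_\mu H_\mu = u_n I_n$. Invertibility of $T_\mu$ on the set we have restricted to (away from the finitely many singular $\omega$) then gives $T_\mu^{-1}=H_\mu/u_n$. First I will read off a closed form for the entries of \eqref{eq:Hd}. With $u_0=1$, every entry fits
\[
(H_\mu)_{ij}=
\begin{cases}
u_{i-1}\,u_{n-j}\,\omega^{\,j-i}, & i\le j,\\
u_{j-1}\,u_{n-i}\,\overline{\omega}^{\,i-j}, & i> j.
\end{cases}
\]
For example, the first row gives $u_0u_{n-j}\omega^{j-1}$ and the last column gives $u_{i-1}u_0\omega^{n-i}$. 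This is the standard da Fonseca--Petronilho form: leading and trailing principal minors of a tridiagonal matrix, multiplied by products of off-diagonal entries.

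Next I will note what the minors are. Since $T_\mu$ is Toeplitz, both its $k\times k$ leading and trailing principal minors equal $u_k=U_k(\mu)$ by the argument of Lemma~\ref{lem:detTd}. They satisfy $u_k=2\mu u_{k-1}-u_{k-2}$, and the recurrence extends with $u_{-1}=0$. The relevant off-diagonal product is $(-\omega)(-\overline{\omega})=|\omega|^2=1$ on the unit circle. I will use this fact, or else treat $\overline{\omega}$ formally as $\omega^{-1}$ so that everything is a Laurent-polynomial identity.

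Then I compute $(T_\mu H_\mu)_{ij}=-\overline{\omega}H_{i-1,j}+2\mu H_{ij}-\omega H_{i+1,j}$, with out-of-range terms dropped, and split into cases.
\begin{itemize}
\item \emph{Case $i<j$.} All three entries lie in the upper part (rows $i-1,i,i+1\le j$). The sum becomes
\[
u_{n-j}\,\omega^{j-i}\bigl(-u_{i-2}+2\mu u_{i-1}-u_i\bigr)=0,
\]
which also covers $i=1$ via $u_{-1}=0$.
\item \emph{Case $i>j$.} Symmetrically, the sum is
\[
u_{j-1}\,\overline{\omega}^{\,i-j}\bigl(-u_{n-i+1}+2\mu u_{n-i}-u_{n-i-1}\bigr)=0.
\]
\item \emph{Case $i=j$.} The sum is $-u_{i-1}u_{n-i+1}+2\mu u_{i-1}u_{n-i}-u_iu_{n-i-1}$. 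Using the recurrence, this collapses to
\[
u_iu_{n-i}-u_{i-1}u_{n-i-1}.
\]
\end{itemize}

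The main obstacle is to show that $u_iu_{n-i}-u_{i-1}u_{n-i-1}=u_n$ for all $i$. I will prove it by induction on $i$, since the expression is independent of $i$. Subtracting the expression at $i$ from the one at $i+1$ gives
\[
u_{i+1}u_{n-i-1}-u_iu_{n-i-2}-u_iu_{n-i}+u_{i-1}u_{n-i-1}
=u_{n-i-1}(u_{i+1}+u_{i-1})-u_i(u_{n-i}+u_{n-i-2}).
\]
Substituting $u_{i+1}+u_{i-1}=2\mu u_i$ and $u_{n-i}+u_{n-i-2}=2\mu u_{n-i-1}$, this equals $2\mu u_{n-i-1}u_i-2\mu u_iu_{n-i-1}=0$. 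At $i=0$ the expression is $u_0u_n-u_{-1}u_{n-1}=u_n$, so it equals $u_n$ for every $i$. Alternatively, this is the addition formula $U_iU_{n-i}-U_{i-1}U_{n-i-1}=U_n$ for Chebyshev polynomials.

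Finally, the centro-Hermitian footnote follows from the closed form: the substitution $(i,j)\mapsto(n+1-i,n+1-j)$ swaps the two cases and conjugates the powers of $\omega$, while $\mu$ is real.
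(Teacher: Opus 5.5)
Your proof is correct, apart from one indexing slip noted below, and it takes a different route from the paper. The paper does not verify anything. It quotes the entrywise inverse of a tridiagonal Toeplitz matrix from da Fonseca--Petronilho (their Corollary~4.1), with $(T_\mu^{-1})_{ij}=u_{i-1}u_{n-j}\omega^{j-i}/u_n$ for $i\le j$ and the conjugate-symmetric formula below the diagonal.

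You instead check $T_\mu H_\mu=u_nI_n$ directly. This uses only the three-term recurrence with $u_{-1}=0$, the relation $\omega\overline{\omega}=1$, and the addition formula $u_iu_{n-i}-u_{i-1}u_{n-i-1}=u_n$. Your telescoping proof of that formula is correct: the difference between consecutive $i$ vanishes by the recurrence applied at indices $i$ and $n-i-1$, and the base case $i=0$ gives $u_n$.

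The citation is shorter, but it hands off the identification of leading and trailing minors and the sign and power bookkeeping to an external result. Your argument is self-contained and slightly stronger. Since $T_\mu H_\mu=u_nI_n$ is checked entrywise without dividing by anything, it holds at every $|\omega|=1$, not only where $T_\mu$ is invertible. To conclude $T_\mu^{-1}=H_\mu/u_n$, you then need $u_n=\det T_\mu\neq 0$, which Lemma~\ref{lem:detTd} supplies on the restricted set.

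The slip is in the diagonal case $i=j$. The relevant entries are $H_{i-1,i}=u_{i-2}u_{n-i}\,\omega$ (upper formula) and $H_{i+1,i}=u_{i-1}u_{n-i-1}\,\overline{\omega}$ (lower formula). So the sum is $-u_{i-2}u_{n-i}+2\mu u_{i-1}u_{n-i}-u_{i-1}u_{n-i-1}$, not the expression $-u_{i-1}u_{n-i+1}+2\mu u_{i-1}u_{n-i}-u_iu_{n-i-1}$ you wrote.

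Your expression does not reduce to the claimed form. For example, with $n=2$ and $i=1$ it equals $1-2\mu$ rather than $u_2=4\mu^2-1$. The corrected sum is $u_{n-i}(2\mu u_{i-1}-u_{i-2})-u_{i-1}u_{n-i-1}=u_iu_{n-i}-u_{i-1}u_{n-i-1}$, which is exactly what you use next. The boundary cases $i=1$ and $i=n$ are handled by $u_{-1}=0$, so the rest of the argument is unaffected.
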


\begin{proof}
By \cite[Corollary~4.1]{daFonPetro01}, the entries of the inverse of the tridiagonal Toeplitz matrix $T_{\mu}$ in \eqref{eq:Td} are given by
\begin{equation*}
\left( T_{\mu}^{-1}\right) _{ij}=\left\{ 
\begin{array}{cc}
    \dfrac{u_{i-1}u_{n-j}}{u_{n}}\omega ^{j-i} & \text{if }i\leq j \medskip\\ 
    \dfrac{u_{j-1}u_{n-i}}{u_{n}}\overline{\omega }^{i-j} & \text{if }i>j%
\end{array}%
\right.
\end{equation*}
where $u_k$ is the $k$th Chebyshev polynomial of the second kind. \qedhere
\end{proof}

Combining the identity $u_n(e_{n}^{\top}T_{\mu}^{-1}e_{n}) = H_{\mu}(n,n) = u_{n-1}$ from Lemma~\ref{lem:invTd} with \eqref{eq:detA_four_terms}, 
$\det(A_{\omega})$ admits the expression
 \begin{equation}\label{eq:WA_result}
  \det(A_{\omega}) = u_n+(\alpha^{\top} H_{\mu} e_n)\omega+(e_n^{\top} H_{\mu}\alpha)\overline{\omega}
   + \underbrace{\frac{1}{u_n}\Big((\alpha^{\top} H_{\mu} e_n)(e_n^{\top} H_{\mu}\alpha)-(\alpha^{\top} H_{\mu}\alpha)u_{n-1}\Big)}_{\displaystyle\mathcal{D}_4}.
\end{equation}

\section{Degree Reduction and Leading Coefficients Vanishing}\label{sec:structural}

This section rewrites the four-term decomposition \eqref{eq:WA_result} in Laurent-polynomial form.
Only $\mathcal D_4$ needs further reduction: it carries the denominator $u_n$ but a Chebyshev product-to-sum identity
cancels this denominator and shows that its positive $\omega$-degree is at most $n-2$.

\subsection{Structural Properties of \texorpdfstring{$\mathcal{D}_4$}{D4}}\label{subsec:D4-structure}

\begin{lemma}\label{lem:D4-formula}
The last term $\mathcal{D}_4$ in \eqref{eq:WA_result} can be explicitly expressed as the following Laurent polynomial in $\omega$:
\begin{equation}\label{eq:D4-combine}
\mathcal{D}_4 =
-\sum_{s=2}^{n}\sum_{t=1}^{s-1}
  \alpha_{s-t}\,\alpha_{n-t}\,u_{s-t-1}\,u_{t-1}\;\omega^{n-s}
-\sum_{s=2}^{n-1}\sum_{t=1}^{s-1}
  \alpha_{s-t}\,\alpha_{n-t}\,u_{s-t-1}\,u_{t-1}\;\overline{\omega}^{\,n-s}.
\end{equation}
\end{lemma}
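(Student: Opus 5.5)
The plan is to expand $\mathcal D_4$ entrywise using the explicit form of $H_\mu$ from Lemma~\ref{lem:invTd}, and then to cancel the denominator $u_n$ term by term with a Chebyshev product identity. From \eqref{eq:Hd}, the last column and last row of $H_\mu$ are $(H_\mu e_n)_i=u_{i-1}\omega^{n-i}$ and $(e_n^\top H_\mu)_j=u_{j-1}\overline{\omega}^{\,n-j}$. Hence
\[
(\alpha^\top H_\mu e_n)(e_n^\top H_\mu\alpha)=\sum_{i,j=1}^n \alpha_i\alpha_j\,u_{i-1}u_{j-1}\,\omega^{n-i}\overline{\omega}^{\,n-j}
\quad\text{and}\quad
\alpha^\top H_\mu\alpha=\sum_{i,j=1}^n\alpha_i\alpha_j (H_\mu)_{ij}.
\]
Since $|\omega|=1$, we have $\omega^{n-i}\overline{\omega}^{\,n-j}=\omega^{j-i}$ for $i\le j$ and $=\overline{\omega}^{\,i-j}$ for $i>j$. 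This has the same phase as $(H_\mu)_{ij}$, which is $u_{i-1}u_{n-j}\omega^{j-i}$ for $i\le j$ and the conjugate-symmetric expression for $i>j$. So the numerator of $\mathcal D_4$ becomes a sum over pairs $(i,j)$. For $i\le j$, each summand is $\alpha_i\alpha_j u_{i-1}\bigl(u_{j-1}-u_{n-1}u_{n-j}\bigr)\omega^{j-i}$, and the terms with $i>j$ mirror these with $\overline{\omega}$.

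The key step is the Christoffel-type identity $U_mU_k-U_{m+1}U_{k-1}=U_{m-k}$ for $m\ge k\ge 1$. I would prove it by induction on $k$ from the three-term recurrence in Lemma~\ref{lem:detTd}, or read it off from the determinant of the matrix power identity quoted there. Applying it with $m=n-1$ and $k=n-j$ gives $u_{n-1}u_{n-j}-u_{j-1}=u_nu_{n-j-1}$ for $1\le j\le n-1$. For $j=n$ the bracket is $u_{n-1}-u_{n-1}=0$. Thus every bracket equals $-u_nu_{n-j-1}$, with the convention that the $j=n$ terms vanish. The factor $u_n$ then cancels, and
\[
\mathcal D_4=-\sum_{1\le i\le j\le n-1}\alpha_i\alpha_j u_{i-1}u_{n-j-1}\omega^{j-i}-\sum_{1\le j<i\le n-1}\alpha_i\alpha_j u_{j-1}u_{n-i-1}\overline{\omega}^{\,i-j}.
\]
This is a genuine Laurent polynomial, so the formula extends by continuity to the finitely many excluded $\omega$.

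Finally I reindex to match \eqref{eq:D4-combine}. In the $\omega$-part, put $t=n-j$ and $s=n-j+i$, so that $\alpha_j=\alpha_{n-t}$, $\alpha_i=\alpha_{s-t}$, $u_{i-1}=u_{s-t-1}$, $u_{n-j-1}=u_{t-1}$ and $\omega^{j-i}=\omega^{n-s}$. The constraints $1\le i\le j\le n-1$ become $2\le s\le n$ and $1\le t\le s-1$. In the $\overline{\omega}$-part, the analogous substitution uses $i$ and $j$ swapped, and the strict inequality removes $s=n$. The diagonal terms $i=j$, which give $\omega^0$, are therefore counted exactly once, in the first sum.

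I expect the main obstacle to be bookkeeping rather than ideas: handling the diagonal and the $j=n$ (or $i=n$) boundary terms correctly, and checking the symmetric $i>j$ case against the centro-Hermitian structure of $H_\mu$. The Chebyshev identity itself is routine once it is stated with the correct index range.
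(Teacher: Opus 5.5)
Your proposal is correct and follows the paper's proof essentially step for step. It uses the same bilinear expansion of the numerator, the same bracket $u_{i-1}(u_{j-1}-u_{n-1}u_{n-j})\omega^{j-i}$ (and its mirror for $i>j$), the same cancellation of $u_n$, and the same substitution $i=s-t$, $j=n-t$ (with $i,j$ swapped in the $\overline{\omega}$-part).

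The only difference is how you get $u_{n-1}u_{n-j}-u_{j-1}=u_nu_{n-j-1}$. You use the Christoffel-type identity $U_mU_k-U_{m+1}U_{k-1}=U_{m-k}$, whereas the paper applies the product-to-sum formula $u_su_t=\sum_r u_{s+t-2r}$ twice. Your identity is most easily proved by checking, via the recurrence, that $U_mU_k-U_{m+1}U_{k-1}$ is unchanged under $(m,k)\mapsto(m-1,k-1)$. The determinant of the matrix power alone gives only the case $m=k$.
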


\begin{proof}
Expanding the numerator of $\mathcal{D}_4$ in the components of $\alpha$, we have
\begin{equation}\label{eq:D4-numerator-bilinear}
(\alpha^{\top} H_{\mu} e_n)(e_n^{\top} H_{\mu} \alpha) - (\alpha^{\top} H_{\mu} \alpha)\,u_{n-1}
= \sum_{i,j=1}^n \alpha_i\alpha_j \Bigl( H_{\mu}(i,n)H_{\mu}(n,j)-H_{\mu}(i,j)u_{n-1} \Bigr).
\end{equation}
By equation \eqref{eq:Hd}, we first consider the case $1\le i\le j\le n$.
The parentheses in equation \eqref{eq:D4-numerator-bilinear} can be written as follows:
\begin{align}\label{eq:D4-ij-before-cancel}
H_{\mu}(i,n)H_{\mu}(n,j)-H_{\mu}(i,j)u_{n-1}
&= u_{i-1}\omega^{n-i}\cdot u_{j-1}\overline{\omega}^{n-j} - u_{i-1}u_{n-j}\omega^{j-i}\cdot u_{n-1} \notag\\
&= u_{i-1} \bigl( u_{j-1}-u_{n-j}u_{n-1} \bigr) \omega^{j-i}.
\end{align}
The Chebyshev product-to-sum identity
\begin{equation}\label{eq:cheb-pro2sum}
  u_{s}\,u_{t} = \sum_{r=0}^{\min\{s,t\}} u_{s+t-2r}, \quad s,t\ge 0,
\end{equation}
which follows from $U_k(\cos\theta) = \sin((k{+}1)\theta)/\sin\theta$ and the product-to-sum formula
$\sin\phi\sin\psi=(\cos(\phi-\psi)-\cos(\phi+\psi))/2$
(cf.\ \cite{MasonHandscomb2002}), together with the convention $u_{-1}=0$, gives
\[
u_{n-j}u_{n-1}
=\sum_{r=0}^{n-j} u_{2n-j-1-2r}
= u_{j-1} + \underbrace{\sum_{r=0}^{n-j-1} u_{2n-j-1-2r}}_{\displaystyle = u_{n}u_{n-j-1}}.
\]
Here the last term, corresponding to $r=n-j$, is $u_{j-1}$,
and the underbraced equality follows by applying the same product-to-sum identity
\eqref{eq:cheb-pro2sum} to $u_nu_{n-j-1}$.
When $j=n$, the underbraced sum is interpreted as an empty sum and agrees with $u_nu_{-1}=0$ under the convention $u_{-1}=0$.
Hence
\begin{align*}
u_{i-1} \bigl( u_{j-1}-u_{n-j}u_{n-1} \bigr)
= u_{i-1} \left(-\sum_{r=0}^{n-j-1} u_{2n-j-1-2r}\right)
= u_{i-1} \big(-u_{n}u_{n-j-1}\big)
= -u_{n}u_{i-1}u_{n-j-1}.
\end{align*}
Consequently, \eqref{eq:D4-ij-before-cancel} reduces to
\begin{equation*}
H_{\mu}(i,n)H_{\mu}(n,j)-H_{\mu}(i,j)u_{n-1} = -u_nu_{i-1}u_{n-j-1}\omega^{j-i}.
\end{equation*}
The same computation, with the roles of $i$ and $j$ interchanged, gives that for $1\le j<i\le n$,
\begin{equation*}
H_{\mu}(i,n)H_{\mu}(n,j)-H_{\mu}(i,j)u_{n-1} = -u_nu_{j-1}u_{n-i-1}\overline{\omega}^{\,i-j}.
\end{equation*}
Combining these two cases in \eqref{eq:D4-numerator-bilinear}, and then dividing by $u_n$, we get
\begin{equation}\label{eq:D4-piecewise-alpha}
\mathcal{D}_4 = -\sum_{1\le i\le j\le n}\alpha_i\alpha_j u_{i-1}u_{n-j-1}\omega^{j-i} - \sum_{1\le j<i\le n} \alpha_i\alpha_j u_{j-1}u_{n-i-1}\overline{\omega}^{\,i-j}.
\end{equation}

We now reindex the first sum in \eqref{eq:D4-piecewise-alpha} so as to
organize its Laurent expansion according to the powers $\omega^{n-s}$.
Set $i=s-t$ and $j=n-t$, where $2\le s\le n$ and $1\le t\le s-1$.
Then $j-i=n-s$ and $u_{i-1}u_{n-j-1}=u_{s-t-1}u_{t-1}$.
Therefore the first sum in \eqref{eq:D4-piecewise-alpha} becomes
\begin{equation}\label{eq:D4-positive-double-direct}
-\sum_{s=2}^{n}\sum_{t=1}^{s-1}
  \alpha_{s-t}\,\alpha_{n-t}\,u_{s-t-1}\,u_{t-1}\;\omega^{n-s}.
\end{equation}
The same reindexing applied to the second sum in \eqref{eq:D4-piecewise-alpha} gives
\begin{equation}\label{eq:D4-negative-double-direct}
-\sum_{s=2}^{n-1}\sum_{t=1}^{s-1}
  \alpha_{s-t}\,\alpha_{n-t}\,u_{s-t-1}\,u_{t-1}\;\overline{\omega}^{\,n-s}.
\end{equation}
Combining \eqref{eq:D4-positive-double-direct} and
\eqref{eq:D4-negative-double-direct} yields the explicit expression \eqref{eq:D4-combine}.
\end{proof}

\begin{corollary}\label{cor:D4-degree}
The term $\mathcal{D}_4$ is a Laurent polynomial in $\omega$ with positive $\omega$-degree at most $n-2$.
\end{corollary}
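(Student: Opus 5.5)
We read Corollary~\ref{cor:D4-degree} off the explicit formula \eqref{eq:D4-combine} in Lemma~\ref{lem:D4-formula}. That formula writes $\mathcal{D}_4$ as a finite sum of two kinds of terms. The first kind is $\omega^{n-s}$ with $2\le s\le n$. The second kind is $\overline{\omega}^{\,n-s}=\omega^{-(n-s)}$ with $2\le s\le n-1$, using $|\omega|=1$. Each term carries the coefficient $\alpha_{s-t}\alpha_{n-t}u_{s-t-1}u_{t-1}$.

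The plan has three steps.

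\emph{Step 1: the coefficients are Laurent polynomials.} Each coefficient $\alpha_{s-t}\alpha_{n-t}$ is a constant. Each $u_k=U_k(\mu)$ has $\mu=\lambda+a(\omega+\overline{\omega})/2$. By Corollary~\ref{cor:un}, applied with $n$ replaced by $k$, $u_k$ is a Laurent polynomial in $\omega$ whose exponents lie in $[-k,k]$. So every product $u_{s-t-1}u_{t-1}$ is a Laurent polynomial with exponents in $[-(s-2),\,s-2]$, since $(s-t-1)+(t-1)=s-2$. In particular $\mathcal{D}_4$ is a Laurent polynomial. Its denominator $u_n$ was already cancelled in Lemma~\ref{lem:D4-formula}, so it extends to all $|\omega|=1$ as a polynomial identity.

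\emph{Step 2: bound the degree of each term.} In the first sum, a term with index $s$ has positive $\omega$-degree at most $(n-s)+(s-2)=n-2$. In the second sum, a term with index $s$ has positive degree at most $-(n-s)+(s-2)=2s-n-2$. Since $s\le n-1$, this is at most $n-4\le n-2$.

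\emph{Step 3: conclude.} Taking the maximum over all terms gives positive $\omega$-degree at most $n-2$.

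The only point needing care is Step~1: the exponent range of $u_k$ must be the symmetric interval $[-k,k]$ and nothing wider. This follows from $0\le 2i+j+2k'\le k$ in Corollary~\ref{cor:un}, where $k'$ denotes that corollary's inner summation index. With that index the exponent is $k-2i-j-2k'$, which lies between $-k$ and $k$. Everything else is bookkeeping.

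By the symmetric argument, the negative degree of $\mathcal{D}_4$ is also at most $n-2$. We record this as a remark, although the statement only requires the positive bound.
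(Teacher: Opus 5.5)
Your proposal is correct and follows the paper's proof. Both arguments read the bound directly off \eqref{eq:D4-combine}, using that $u_k=U_k(\mu)$ has $\omega$-exponents in $[-k,k]$, which gives $(n-s)+(s-2)=n-2$ for the first sum and $2s-n-2\le n-4$ for the second. Your extra observations, the symmetric negative-degree bound and the extension across the zeros of $u_n$ (the latter is the paper's Remark~\ref{rem:formula-extends}), are consistent with the paper.
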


\begin{proof}
Recall that the parameter $\mu = \lambda + a(\omega+\overline{\omega})/2$ ensures that the highest positive $\omega$-power in the Chebyshev polynomial $u_k = U_{k}(\mu)$ is at most $k$. We analyze the maximum positive $\omega$-degree for each sum in the explicit formula \eqref{eq:D4-combine}.

For the positive $\omega$-powers in the first sum, the maximum degree of each summand is the sum of the degrees of its components ($u_{s-t-1}$, $u_{t-1}$, and $\omega^{n-s}$):
\begin{equation*}
(s-t-1) + (t-1) + (n-s) = n-2.
\end{equation*}

For the negative $\omega$-powers involving $\overline{\omega}^{\,n-s} = \omega^{s-n}$ in the second sum, the maximum positive $\omega$-degree of each term is similarly bounded above by $n-4$:
\begin{equation*}
(s-t-1) + (t-1) + (s-n) = 2s - n - 2 \le 2(n-1) - n - 2 = n - 4.
\end{equation*} 

Thus, all terms in $\mathcal{D}_4$ have a positive $\omega$-degree bounded by $n-2$.
\end{proof}

\begin{remark}\label{rem:formula-extends}
The closed form \eqref{eq:WA_result} was derived under the assumption that $T_\mu$ is nonsingular.
By Corollary~\ref{cor:D4-degree},
however, the right-hand side is a Laurent polynomial in $\omega$, as is $\det(A_\omega)$ itself.
Since two Laurent polynomials that agree on a cofinite subset of the unit circle must agree identically,
\eqref{eq:WA_result} holds for every $|\omega|=1$, including the finitely many excluded values.
\end{remark}

We now compute the coefficients of $\mathcal{D}_4$ at $\omega^{n-2}$ and $\omega^{n-3}$
under the single-eigenvalue specialization $a_r=(-1)^r\binom{n}{r}a^r$,
$1\le r\le n$, introducing two integer arrays used throughout the rest of the paper.

\begin{lemma}\label{lem:D4-coef}
Assume $a_r=(-1)^r\binom{n}{r}a^r$ for $1\le r\le n$, and let $\mathcal{D}_4$ be the explicit formula \eqref{eq:D4-combine} derived in Lemma~\ref{lem:D4-formula}.
For $n\ge 2$ and $1\le k\le n$ (with the convention $\binom{n}{-1}=0$), define
\begin{equation}\label{eq:cd-def}
T(n,k):=\binom{n-1}{k-1}\binom{n}{k},
\qquad
S(n,k):=\Biggl((n-1)\binom{n-1}{k-1}-\binom{n-2}{k-2}\Biggr)\binom{n}{k}.
\end{equation}
Then the coefficient of $\omega^{n-2}$ in $\mathcal{D}_4$ is
\begin{equation}\label{eq:D4-000}
-a^{n-2}\sum_{k=2}^{n}T(n,k)\,a^{2k},
\end{equation}
and the coefficient of $\omega^{n-3}$ in $\mathcal{D}_4$ is
\begin{equation}\label{eq:D4-010}
-2\lambda\,a^{n-3}\sum_{k=2}^{n}S(n,k)\,a^{2k}.
\end{equation}
\end{lemma}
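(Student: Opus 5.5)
The plan is to read off both coefficients directly from the explicit formula \eqref{eq:D4-combine}. The leading and subleading coefficients of each $u_k$ are supplied by Corollary~\ref{cor:un_leading_coeff}, and the remaining sum over $\alpha$-products collapses by partial alternating binomial sums.

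\textbf{Step 1: only the first sum matters, via its top two coefficients.} By the proof of Corollary~\ref{cor:D4-degree}, the second sum in \eqref{eq:D4-combine} has positive $\omega$-degree at most $n-4$, so it does not contribute to $\omega^{n-2}$ or $\omega^{n-3}$. In the first sum, the summand indexed by $(s,t)$ is $\omega^{n-s}u_{s-t-1}u_{t-1}$, and the product $u_{s-t-1}u_{t-1}$ has $\omega$-degree at most $s-2$. Write $p=s-t-1$ and $q=t-1$, so $p+q=s-2$. By Corollary~\ref{cor:un_leading_coeff} (trivially extended to $u_0=1$), the coefficient of $\omega^{s-2}$ in $u_pu_q$ is $a^{p}a^{q}=a^{s-2}$. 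The coefficient of $\omega^{s-3}$ is $2p\lambda a^{p-1}a^q+2q\lambda a^{q-1}a^p=2(s-2)\lambda a^{s-3}$. Hence
\[
[\omega^{n-2}]\mathcal D_4=-\sum_{s,t}\alpha_{s-t}\alpha_{n-t}a^{s-2},\qquad
[\omega^{n-3}]\mathcal D_4=-2\lambda\sum_{s,t}(s-2)\,\alpha_{s-t}\alpha_{n-t}a^{s-3}.
\]

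\textbf{Step 2: substitute the specialization and reindex.} Using $\alpha_i=a_{n+1-i}$ we get $\alpha_{s-t}=a_{s-t-1}'$ with index $n+1-s+t$ and $\alpha_{n-t}=a_{t+1}$. With $a_r=(-1)^r\binom nr a^r$, the product $\alpha_{s-t}\alpha_{n-t}$ equals $(-1)^{n-s}\binom{n}{s-t-1}\binom{n}{t+1}a^{n+2-s+2t}$. Set $k=t+1\in\{2,\dots,n\}$ and $m=s-t-1\in\{0,\dots,n-k\}$, so that $s=m+k$. Multiplying by $a^{s-2}$ gives the power $a^{n-2+2k}$, and multiplying by $a^{s-3}$ gives $a^{n-3+2k}$; these match the powers in \eqref{eq:D4-000} and \eqref{eq:D4-010}. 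The claims therefore reduce, for each fixed $k$, to the two binomial identities
\[
\sum_{m=0}^{n-k}(-1)^{n-k-m}\binom nm=\binom{n-1}{k-1},\qquad
\sum_{m=0}^{n-k}(-1)^{n-k-m}(m+k-2)\binom nm=(n-1)\binom{n-1}{k-1}-\binom{n-2}{k-2}.
\]
Multiplying each by $\binom nk$ then yields $T(n,k)$ and $S(n,k)$.

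\textbf{Step 3: the binomial identities.} The first identity is the standard partial alternating sum $\sum_{m=0}^{N}(-1)^{N-m}\binom nm=\binom{n-1}{N}$ with $N=n-k$. It follows by telescoping Pascal's rule, since $\binom nm=\binom{n-1}{m}+\binom{n-1}{m-1}$.

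For the second identity, split $m+k-2=m+(k-2)$. Using $m\binom nm=n\binom{n-1}{m-1}$ and the same partial-sum formula with $n-1$ and $N-1$, the $m$-part equals $n\binom{n-2}{k-1}$. The $(k-2)$-part equals $(k-2)\binom{n-1}{k-1}$. It remains to check
\[
n\binom{n-2}{k-1}+(k-2)\binom{n-1}{k-1}=(n-1)\binom{n-1}{k-1}-\binom{n-2}{k-2},
\]
that is, $(n-k+1)\binom{n-1}{k-1}=n\binom{n-2}{k-1}+\binom{n-2}{k-2}$. Expand $\binom{n-1}{k-1}=\binom{n-2}{k-1}+\binom{n-2}{k-2}$ and use $(n-k)\binom{n-2}{k-2}=(k-1)\binom{n-2}{k-1}$. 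The left side becomes $(n-k+1)\binom{n-2}{k-1}+(k-1)\binom{n-2}{k-1}+\binom{n-2}{k-2}$, which equals the right side.

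I expect the main obstacle to be Step 2: tracking the signs $(-1)^{n-s}$ and the ranges of $m$ and $k$ correctly, including the boundary cases $t=1$ (where $u_0=1$) and $s=n$. The closing identity in Step 3 is routine, and would naturally be collected in Appendix~\ref{app:identities}.
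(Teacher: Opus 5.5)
Your proposal is correct and follows essentially the same route as the paper: you discard the $\overline{\omega}$-part by the degree bound, extract the top two coefficients $a^{s-2}$ and $2(s-2)\lambda a^{s-3}$, reindex with $k=t+1$, and close with the partial alternating sums \eqref{eq:partial-altsum}, \eqref{eq:partial-altsum-firstmoment} and the reduction \eqref{eq:D4-binomial-reduction}. The only local difference is that you read these two coefficients off the product $u_{s-t-1}u_{t-1}$ directly via Corollary~\ref{cor:un_leading_coeff}, whereas the paper first collapses the product with the product-to-sum identity \eqref{eq:cheb-pro2sum} and keeps only the $r=0$ term $u_{s-2}$; both give the same values.
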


\begin{proof}
Applying \eqref{eq:cheb-pro2sum} to $u_{s-t-1}\,u_{t-1}$
in \eqref{eq:D4-positive-double-direct}, the positive-$\omega$ part of $\mathcal{D}_4$ expands to
\begin{equation}\label{eq:D4-positive-triple}
-\sum_{s=2}^{n}\sum_{t=1}^{s-1}\sum_{r=0}^{\min(s-t-1,\,t-1)}
  \alpha_{s-t}\,\alpha_{n-t}\,u_{s-2-2r}\;\omega^{n-s}.
\end{equation}
The $\overline{\omega}$-part of $\mathcal{D}_4$ contributes only to positive
degrees at most $n-4$ (Corollary~\ref{cor:D4-degree}), hence is irrelevant for
both target exponents $\omega^{n-2}$ and $\omega^{n-3}$.

\textit{Formula \eqref{eq:D4-000}.}
The target $\omega^{n-2}$ requires $r=0$ in \eqref{eq:D4-positive-triple}.
By \eqref{eq:un-expansion} with triple $(0,0,0)$,
the $\omega^{s-2}$-coefficient of $u_{s-2}$ is $a^{s-2}$,
so the contribution of $\mathcal{D}_4$ to the coefficient of $\omega^{n-2}$ is
\[
-\sum_{s=2}^{n}a^{s-2}\left(\sum_{t=1}^{s-1}\alpha_{s-t}\,\alpha_{n-t}\right).
\]
Substituting $\alpha_i=(-1)^{n+1-i}\binom{n}{n+1-i}a^{n+1-i}$,
setting $q=s-t-1$, and reindexing $k=s$ gives
\[
-a^{n-2}\sum_{k=2}^{n}(-1)^{n+k}\binom{n}{k}a^{2k}
   \Biggl(\sum_{q=0}^{n-k}(-1)^q\binom{n}{q}\Biggr).
\]
By \eqref{eq:partial-altsum} the inner sum equals $(-1)^{n-k}\binom{n-1}{k-1}$,
giving
\[
-a^{n-2}\sum_{k=2}^{n}\binom{n-1}{k-1}\binom{n}{k}\,a^{2k}
= -a^{n-2}\sum_{k=2}^{n}T(n,k)\,a^{2k},
\]
which is \eqref{eq:D4-000}.

\textit{Formula \eqref{eq:D4-010}.}
The target $\omega^{n-3}$ requires $r=0$ and the $(0,1,0)$-coefficient of $u_{s-2}$,
which is $2(s-2)\lambda a^{s-3}$ by \eqref{eq:un-expansion}; this vanishes at $s=2$,
so the sum starts at $s=3$:
\[
-2\lambda\sum_{s=3}^{n}(s-2)a^{s-3}\left(\sum_{t=1}^{s-1}\alpha_{s-t}\,\alpha_{n-t}\right).
\]
The same substitution and reindexing as above gives
\[
-2\lambda a^{n-3}\sum_{k=2}^{n}(-1)^{n+k}\binom{n}{k}a^{2k}
\Biggl(\sum_{q=0}^{n-k}q(-1)^q\binom{n}{q}
+(k-2)\sum_{q=0}^{n-k}(-1)^q\binom{n}{q}\Biggr).
\]
Applying \eqref{eq:partial-altsum-firstmoment} and \eqref{eq:partial-altsum}
with upper bound $n-k$, and using
$\binom{n-2}{n-k-1}=\binom{n-2}{k-1}$, $\binom{n-1}{n-k}=\binom{n-1}{k-1}$,
the expression in parentheses evaluates to
$(-1)^{n-k}\bigl(n\binom{n-2}{k-1}+(k-2)\binom{n-1}{k-1}\bigr)$.
Applying \eqref{eq:D4-binomial-reduction} gives
\[
-2\lambda a^{n-3}\sum_{k=2}^{n}
\left((n-1)\binom{n-1}{k-1}-\binom{n-2}{k-2}\right)\binom{n}{k}\,a^{2k}
= -2\lambda a^{n-3}\sum_{k=2}^{n}S(n,k)\,a^{2k},
\]
which is \eqref{eq:D4-010}. \qedhere
\end{proof}

\begin{remark}\label{rem:OEIS}
The array $T(n,k)=\binom{n-1}{k-1}\binom{n}{k}$ appears, up to a shift of
indices, as OEIS\footnote{OEIS denotes \emph{The On-Line Encyclopedia of
Integer Sequences}; see \url{https://oeis.org}.} entry A103371
\cite{OEIS103371}. Moreover, $S(n,k)$ is a weighted variant of $T(n,k)$:
\[
S(n,k)=\left(n-1 - \frac{k-1}{n-1}\right) T(n,k),
\]
where the integrality of $S(n,k)$ follows from its definition \eqref{eq:cd-def},
since $(k-1)/(n-1)\binom{n-1}{k-1}=\binom{n-2}{k-2}$ is an integer.
In particular, $S(n,k)$ is a strictly positive integer for $n\ge 3$ and $2\le k\le n$;
note that both $T(n,k)$ and $S(n,k)$ are defined purely combinatorially by
\eqref{eq:cd-def} and depend on $n$ and $k$ only, with no reference to the
parity of $n$.
\end{remark}

\subsection{Coefficient Relations and the Vanishing of the Leading Terms}\label{subsec:D1-relations}

The rank-update formula \eqref{eq:detlemma} yields the four-term expression \eqref{eq:WA_result} for $\det(A_\omega)$,
which we label as follows:
\begin{subequations}\label{eq:D1234}
\begin{align}
\mathcal{D}_1 &= u_n
               = \sum_{i=0}^{\lfloor n/2\rfloor}\sum_{j=0}^{n-2i}\sum_{k=0}^{n-2i-j}(-1)^{i}
                 \binom{n-i}{i,j,k,n-2i-j-k}(2\lambda)^{j}a^{n-2i-j}\omega^{n-2i-j-2k}, \label{eq:D1}\\
\mathcal{D}_2 &= (\alpha^{\top} H_{\mu} e_n)\omega
              = \sum_{s=0}^{n-1}a_{n-s}u_s\,\omega^{n-s},\label{eq:D2}\\
\mathcal{D}_3 &= (e_n^{\top} H_{\mu}\alpha)\overline{\omega}
              = \sum_{s=0}^{n-1}a_{n-s}\,u_s\,\overline{\omega}^{\,n-s},\label{eq:D3}\\
\mathcal{D}_4 &= \frac{1}{u_n}\bigl[(\alpha^{\top} H_{\mu} e_n)(e_n^{\top} H_{\mu}\alpha) - (\alpha^{\top} H_{\mu}\alpha)\,u_{n-1}\bigr] \notag \\
              &= -\sum_{s=2}^{n}\sum_{t=1}^{s-1}\alpha_{s-t}\,\alpha_{n-t}\,u_{s-t-1}\,u_{t-1}\;\omega^{n-s}
                 -\sum_{s=2}^{n-1}\sum_{t=1}^{s-1} \alpha_{s-t}\,\alpha_{n-t}\,u_{s-t-1}\,u_{t-1}\;\overline{\omega}^{\,n-s}.\label{eq:D4}
\end{align}
\end{subequations}

\begin{proposition}\label{prop:Di-relation}
Assume $a_r=(-1)^r\binom{n}{r}a^r$ for $1\le r\le n$, and let
$\mathcal{D}_1,\mathcal{D}_2,\mathcal{D}_3$ be as in \eqref{eq:D1234}.
For every valid triple $(i,j,k)$ in \eqref{eq:un-expansion}, denote by
$\mathcal{D}_\ell(i,j,k)$ the contribution of that triple to the coefficient
of $\omega^{n-2i-j-2k}$ in $\mathcal{D}_\ell$, for $\ell=1,2,3$.
Then the following coefficient relations hold.

\begin{enumerate}[label=\textnormal{(\roman*)},leftmargin=*]

\item For nonnegative integers $j,k$ with $j+2k\le n-1$,
\begin{equation}\label{eq:D2-0jk}
\mathcal{D}_2(0,j,k) = -\mathcal{D}_1(0,j,k).
\end{equation}
Moreover, for integers $j$ with $0\le j\le n-2$ and $n+j$ even,
\begin{equation}\label{eq:D2-1j0}
\mathcal{D}_2(1,j,0) = \left(-1+\frac{1}{\dbinom{n-1}{j+1}}\right)\mathcal{D}_1(1,j,0).
\end{equation}

\item For nonnegative integers $j,k$ with $k\ge 1$ and $j+2k\le n$,
\begin{equation}\label{eq:D3-0jk}
\mathcal{D}_3(0,j,k) = -\mathcal{D}_1(0,j,k).
\end{equation}

\end{enumerate}
\end{proposition}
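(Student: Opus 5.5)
The plan is to compute each contribution $\mathcal{D}_\ell(i,j,k)$ directly from Corollary~\ref{cor:un}, applied to $u_s$ in place of $u_n$, and then to reduce everything to alternating binomial sums. Throughout I substitute $a_{n-s}=(-1)^{n-s}\binom{n}{s}a^{n-s}$.

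\textbf{Case $\mathcal{D}_2(0,j,k)$.} In $\mathcal{D}_2=\sum_{s}a_{n-s}u_s\omega^{n-s}$, the triple $(i,j,k)$ of $u_s$ contributes to $\omega^{s-2i-j-2k}\cdot\omega^{n-s}=\omega^{n-2i-j-2k}$. This is the same exponent as the triple $(i,j,k)$ in $\mathcal{D}_1$, so the labelling is consistent. For $i=0$ the contribution is
\[
\sum_{s=j+k}^{n-1}(-1)^{n-s}\binom{n}{s}\frac{s!}{j!\,k!\,(s-j-k)!}(2\lambda)^j a^{n-j}.
\]
The key identity is
\[
\binom{n}{s}\frac{s!}{j!k!(s-j-k)!}=\binom{n}{j,k,n-j-k}\binom{n-j-k}{s-j-k}.
\]
Over the full range $s=j+k,\dots,n$ the alternating sum $\sum(-1)^{n-s}\binom{n-j-k}{s-j-k}$ vanishes, because $n-j-k\ge1$ under $j+2k\le n-1$. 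Omitting the $s=n$ term therefore leaves exactly $-1$ times that term, and that term is $\mathcal{D}_1(0,j,k)$. This gives \eqref{eq:D2-0jk}.

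\textbf{Case $\mathcal{D}_3(0,j,k)$.} Here the term $\overline{\omega}^{\,n-s}$ shifts exponents, so I match the exponent $n-j-2k$ with the triple $(0,j,k')$ of $u_s$, where $k'=k-(n-s)$. Equivalently, I would argue via the symmetry $k\leftrightarrow s-j-k$ of the inner binomial. This forces $s\ge n-k$. Putting $m=n-s\in\{1,\dots,k\}$, the summand becomes
\[
\binom{n}{j,k,n-j-k}(-1)^m\binom{k}{m}(2\lambda)^ja^{n-j}.
\]
Since $\sum_{m=1}^{k}(-1)^m\binom{k}{m}=-1$ for $k\ge1$, this gives \eqref{eq:D3-0jk}. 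I will state explicitly how the triple indexing of $\mathcal{D}_3$ is to be read (by exponent of $\omega$ and power of $\lambda$), since the proposition leaves this implicit.

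\textbf{Case $\mathcal{D}_2(1,j,0)$ (main obstacle).} By Corollary~\ref{cor:un} we have
\[
\mathcal{D}_1(1,j,0)=-\frac{(n-1)!}{j!(n-2-j)!}(2\lambda)^ja^{n-2-j},
\]
while
\[
\mathcal{D}_2(1,j,0)=-\sum_{s=j+2}^{n-1}(-1)^{n-s}\binom{n}{s}\frac{(s-1)!}{j!(s-2-j)!}(2\lambda)^ja^{n-2-j}.
\]
The difficulty is the leftover factor $1/s$:
\[
\binom{n}{s}(s-1)!=\frac{n!}{s\,(n-s)!}.
\]
I would absorb it with $1/s=\int_0^1x^{s-1}\,dx$. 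Setting $N=n-2-j$ and $c=j+2$, the full sum over $s=c,\dots,n$ becomes
\[
\frac{n!}{j!\,N!}\int_0^1x^{c-1}(x-1)^N\,dx=\frac{n!}{j!\,N!}(-1)^N\frac{(c-1)!\,N!}{(N+c)!}.
\]
The parity hypothesis $n+j$ even makes $(-1)^N=1$, so this equals $(j+1)!/j!=j+1$ times the prefactor. I then subtract the $s=n$ term, which is $-\mathcal{D}_1(1,j,0)$ up to sign, and compare ratios. The ratio $(j+1)\,j!(n-2-j)!/(n-1)!$ is exactly $1/\binom{n-1}{j+1}$, which yields \eqref{eq:D2-1j0}. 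I expect the sign bookkeeping in this last step to be the delicate point. I will verify it on a small case such as $n=4$, $j=0$ before finalizing.
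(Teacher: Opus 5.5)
Your proposal is correct. For $\mathcal{D}_2(0,j,k)$ and $\mathcal{D}_3(0,j,k)$ it is the paper's argument in substance:
\begin{itemize}
\item your factorization $\binom{n}{s}\binom{s}{j,k,s-j-k}=\binom{n}{j,k,n-j-k}\binom{n-j-k}{s-j-k}$ is what Lemma~\ref{lem:altsum-shifted} does internally;
\item your reindexing $s=n-m$, $k'=k-m$ with $\binom{n}{m}\binom{n-m}{k-m}=\binom{n}{k}\binom{k}{m}$ is identical to the paper's.
\end{itemize}

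The genuine difference is in $\mathcal{D}_2(1,j,0)$. The paper never meets your factor $1/s$. It groups $\frac{(s-1)!}{j!\,(s-2-j)!}=(j+1)\binom{s-1}{j+1}$, which is a polynomial in $s$ of degree $j+1<n$. Hence $\sum_{s=0}^{n}(-1)^{n-s}\binom{n}{s}\binom{s-1}{j+1}=0$ by Lemma~\ref{lem:altsum-shifted}. The needed partial sum is then minus the two boundary terms: $\binom{n-1}{j+1}$ at $s=n$, and $(-1)^n\binom{-1}{j+1}=(-1)^{n+j+1}$ at $s=0$. You instead pair $\binom{n}{s}$ with $(s-1)!$ and evaluate the full range $s=j+2,\dots,n$ in closed form via $1/s=\int_0^1x^{s-1}\,dx$ and a Beta integral, obtaining $(-1)^{n+j}(j+1)$.

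The two routes trade off differently. The paper's reuses the same finite-difference lemma as the $(0,j,k)$ case, but relies on extending $\binom{s-1}{j+1}$ polynomially to $s=0$. Yours avoids negative-argument binomials and gives the full sum directly, at the cost of an analytic detour. In both, the parity hypothesis only fixes the sign $(-1)^{n+j}$.

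On the sign bookkeeping you flagged: the $s=n$ term of your extended sum is exactly $\mathcal{D}_1(1,j,0)$, not merely up to sign, because the weight $(-1)^{n-s}\binom{n}{s}a^{n-s}$ equals $1$ at $s=n$. So $\mathcal{D}_2(1,j,0)=-(j+1)(2\lambda)^ja^{n-2-j}-\mathcal{D}_1(1,j,0)$. Since $\mathcal{D}_1(1,j,0)=-(j+1)\binom{n-1}{j+1}(2\lambda)^ja^{n-2-j}$, this is exactly \eqref{eq:D2-1j0}. For $n=4$, $j=0$ it gives $2a^2$, matching Table~\ref{tab:even-front}.

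One correction to the convention you plan to state for $\mathcal{D}_3$. Identifying the label $(0,j,k)$ only by the exponent of $\omega$ and the power of $\lambda$ is too coarse when $k\ge 2$. Triples $(i,j,k-m-i)$ of $u_{n-m}$ with $i\ge 1$ also land on $\lambda^j\omega^{n-j-2k}$, carrying $a^{n-j-2i}$. For example, with $n=5$, $j=0$, $k=2$, the triple $(1,0,0)$ of $u_4$ contributes $15a^3$. Including such terms would break \eqref{eq:D3-0jk}. Restrict to $i=0$, or equivalently also fix the power $a^{n-j}$; that is what your computation, and the paper's, actually uses.
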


\begin{proof}
\medskip\noindent\textbf{Part (i): \eqref{eq:D2-0jk} and \eqref{eq:D2-1j0}.}

In $\mathcal{D}_2=\sum_{s=0}^{n-1}a_{n-s}u_s\,\omega^{n-s}$, the triple $(0,j,k)$ of $u_s$
contributes $\omega^{s-j-2k}$, so the net $\omega$-exponent is $(s-j-2k)+(n-s)=n-j-2k$,
independent of $s$.
By \eqref{eq:un-expansion}, the $(0,j,k)$-contribution from $u_s$ is
$\binom{s}{j,k,s-j-k}(2\lambda)^j a^{s-j-2k}$.
Factoring the multinomial coefficient as $\binom{s}{j,k,s-j-k} = \binom{j+k}{k}\binom{s}{j+k}$ and noting that $a_{n-s} = (-1)^{n-s}\binom{n}{s}a^{n-s}$, we can write $\mathcal{D}_2(0,j,k)$ by summing over $s$:
\[
\mathcal{D}_2(0,j,k) = (2\lambda)^j a^{n-j-2k} \binom{j+k}{k} \sum_{s=0}^{n-1}(-1)^{n-s}\binom{n}{s}\binom{s}{j+k},
\]
where the sum can safely start from $s=0$ since $\binom{s}{j+k}=0$ for $s < j+k$.
Because $j+k \le j+2k \le n-1 < n$, Lemma~\ref{lem:altsum-shifted} implies that the full alternating sum from $s=0$ to $n$ vanishes identically.
Therefore, the sum up to $n-1$ is the negative of the single $s=n$ term:
\[
\sum_{s=0}^{n-1}(-1)^{n-s}\binom{n}{s}\binom{s}{j+k} = -\binom{n}{n}\binom{n}{j+k} = -\binom{n}{j+k}.
\]
Substituting this back, we obtain $-\mathcal{D}_1(0,j,k)$, proving \eqref{eq:D2-0jk}.

For \eqref{eq:D2-1j0}, the triple $(1,j,0)$ of $u_s$ contributes $\omega^{s-2-j}$.
The factor $\omega^{n-s}$ in $\mathcal{D}_2$ again cancels the $s$-dependence, yielding a net exponent of $n-j-2$.
By \eqref{eq:un-expansion}, the $(1,j,0)$-contribution from $u_s$ is $-(s-1)\binom{s-2}{j}(2\lambda)^j a^{s-j-2}$.
Using the identity $(s-1)\binom{s-2}{j} = (j+1)\binom{s-1}{j+1}$, and extending the sum down to $s=1$ (since the terms vanish for $s \le j+1$), we have:
\begin{equation}\label{eq:D2-1j0-sum}
\mathcal{D}_2(1,j,0) = -(j+1)(2\lambda)^j a^{n-2-j} \sum_{s=1}^{n-1}(-1)^{n-s}\binom{n}{s}\binom{s-1}{j+1}.
\end{equation}
To evaluate this sum, we apply the Pascal iteration $\binom{s-1}{j+1} = \sum_{r=0}^{j+1}(-1)^{j+1-r}\binom{s}{r}$.
Because $j \le n-2$, the maximum index in the iteration is $j+1 \le n-1 < n$.
Thus, by Lemma~\ref{lem:altsum-shifted}, the full sum from $s=0$ to $n$ of $(-1)^{n-s}\binom{n}{s}\binom{s-1}{j+1}$ must identically vanish.
Consequently, the sum from $s=1$ to $n-1$ is just the negative of the boundary terms at $s=0$ and $s=n$:
\[
\sum_{s=1}^{n-1}(-1)^{n-s}\binom{n}{s}\binom{s-1}{j+1} = -\left( (-1)^n\binom{n}{0}\binom{-1}{j+1} + (-1)^0\binom{n}{n}\binom{n-1}{j+1} \right).
\]
Since $\binom{-1}{j+1} = \frac{(-1)(-2) \cdots(-(j+1))}{(j+1)!} = (-1)^{j+1}$, the $s=0$ term becomes $(-1)^{n+j+1}$.
Thus, the sum equals $-(-1)^{n+j+1} - \binom{n-1}{j+1} = (-1)^{n+j} - \binom{n-1}{j+1}$.
Inserting this into \eqref{eq:D2-1j0-sum}:
\[
\mathcal{D}_2(1,j,0) = -(j+1)\bigl[(-1)^{n+j}-\binom{n-1}{j+1}\bigr](2\lambda)^j a^{n-2-j}.
\]
Finally, comparing this with $\mathcal{D}_1(1,j,0) = -(j+1)\binom{n-1}{j+1}(2\lambda)^j a^{n-2-j}$ (obtained via $(n-1)\binom{n-2}{j}=(j+1)\binom{n-1}{j+1}$), and noting that $(-1)^{n+j}=1$ because $n+j$ is even, we immediately obtain \eqref{eq:D2-1j0}.

\medskip\noindent\textbf{Part (ii): \eqref{eq:D3-0jk}.}

Specializing \eqref{eq:un-expansion} to $i=0$ gives the closed form
\begin{equation}\label{eq:D1-0jk}
\mathcal{D}_1(0,j,k) = \binom{n}{j,k,n-j-k}(2\lambda)^j a^{n-j-2k}.
\end{equation}

In $\mathcal{D}_3=\sum_{s=0}^{n-1}a_{n-s}u_s\,\overline{\omega}^{\,n-s}$,
the triple $(0,j,k')$ of $u_s$ contributes $\omega^{s-j-2k'}$;
since $\overline{\omega}^{n-s}=\omega^{s-n}$, the net $\omega$-exponent is
$(s-j-2k')+(s-n)=2s-n-j-2k'$.
For this to equal the target $n-j-2k$, we need $2s-n-j-2k'=n-j-2k$, which simplifies to $s=n-(k-k')$.

Let $r = k-k'$. The index $k'\ge 0$ implies $r \le k$, and the summation bound $s \le n-1$ implies $n-r \le n-1$, so $r \ge 1$.
Thus, the target exponent is formed precisely when $s = n-r$ for $r \in \{1, 2, \dots, k\}$, corresponding to $k' = k-r$.
For each such $r$, the weight is $a_{n-s} = a_r = (-1)^r\binom{n}{r}a^r$. By \eqref{eq:un-expansion}, the $(0,j,k-r)$-contribution of $u_{n-r}$ is
$\binom{n-r}{j,k-r,n-k-j}(2\lambda)^j a^{n-j-2k+r}$.
Hence,
\[
\mathcal{D}_3(0,j,k)
= (2\lambda)^j a^{n-j}\sum_{r=1}^{k}(-1)^{r}\binom{n}{r}\frac{(n-r)!}{j!\,(k-r)!\,(n-k-j)!}.
\]
Writing $\frac{(n-r)!}{(k-r)!}=(n-k)!\binom{n-r}{k-r}$, the sum becomes
\[
\frac{(n-k)!}{j!\,(n-k-j)!}\sum_{r=1}^{k}(-1)^r\binom{n}{r}\binom{n-r}{k-r}.
\]
By the identity $\binom{n}{r}\binom{n-r}{k-r}=\binom{n}{k}\binom{k}{r}$,
\[
\sum_{r=1}^{k}(-1)^r\binom{n}{r}\binom{n-r}{k-r}
= \binom{n}{k}\sum_{r=1}^{k}(-1)^r\binom{k}{r}
= -\binom{n}{k},
\]
where the last equality follows from $\sum_{r=0}^{k}(-1)^r\binom{k}{r}=0$ for $k\ge 1$
(Lemma~\ref{lem:altsum}, \eqref{eq:altsum-1}).
Therefore
\[
\mathcal{D}_3(0,j,k)
= -\frac{(n-k)!\binom{n}{k}}{j!\,(n-k-j)!}(2\lambda)^j a^{n-j}
= -\frac{n!}{j!\,k!\,(n-j-k)!}(2\lambda)^j a^{n-j}
= -\mathcal{D}_1(0,j,k),
\]
using $\dbinom{n}{k}\dfrac{(n-k)!}{(n-k-j)!}=\dfrac{n!}{k!\,(n-k-j)!}$
and comparing with \eqref{eq:D1-0jk}. \qedhere
\end{proof}

\begin{corollary}\label{cor:leading-vanishing}
Let $A$ be the $n\times n$ companion matrix associated with $p(z)=(z-a)^n$, where $a\in\mathbb{R}$.
Then the coefficients of $\omega^n$ and $\omega^{n-1}$ in $\det(A_\omega)$ both vanish,
independently of $\lambda$ and of any geometric hypothesis on $W(A)$.
\end{corollary}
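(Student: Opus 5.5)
We write $\det(A_\omega)=\mathcal{D}_1+\mathcal{D}_2+\mathcal{D}_3+\mathcal{D}_4$ as in \eqref{eq:D1234}, valid for all $|\omega|=1$ by Remark~\ref{rem:formula-extends}, and isolate the contributions of each term to the coefficients of $\omega^n$ and $\omega^{n-1}$. The first step is a degree count. By Corollary~\ref{cor:D4-degree}, $\mathcal{D}_4$ has positive $\omega$-degree at most $n-2$, so it contributes to neither target exponent. For $\mathcal{D}_3=\sum_{s=0}^{n-1}a_{n-s}u_s\overline{\omega}^{\,n-s}$, the top positive power of $u_s$ is $\omega^s$, so each summand has positive degree at most $2s-n\le n-2$. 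Hence $\mathcal{D}_3$ is also irrelevant. Only $\mathcal{D}_1$ and $\mathcal{D}_2$ remain.

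Next we identify the triples. In the parametrization of Corollary~\ref{cor:un}, the exponent of $\omega$ is $n-2i-j-2k$. This equals $n$ only for $(i,j,k)=(0,0,0)$ and equals $n-1$ only for $(0,1,0)$. The same holds for $\mathcal{D}_2$: as observed in the proof of Proposition~\ref{prop:Di-relation}, the net exponent of the $(i,j,k)$-contribution of $u_s$ times $\omega^{n-s}$ is $n-2i-j-2k$, independent of $s$. So the $\omega^n$ coefficient of $\det(A_\omega)$ is $\mathcal{D}_1(0,0,0)+\mathcal{D}_2(0,0,0)$, and the $\omega^{n-1}$ coefficient is $\mathcal{D}_1(0,1,0)+\mathcal{D}_2(0,1,0)$.

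Then we apply \eqref{eq:D2-0jk} from Proposition~\ref{prop:Di-relation}(i). It requires $j+2k\le n-1$, which holds for $(j,k)=(0,0)$ and $(1,0)$ whenever $n\ge 2$. It gives $\mathcal{D}_2(0,j,k)=-\mathcal{D}_1(0,j,k)$, so both sums vanish. As a sanity check, Corollary~\ref{cor:un_leading_coeff} gives $\mathcal{D}_1$ coefficients $a^n$ and $2n\lambda a^{n-1}$. Directly, $\mathcal{D}_2$ gives $\sum_{s=0}^{n-1}(-1)^{n-s}\binom{n}{s}a^n=-a^n$ and $2\lambda a^{n-1}\sum_{s=0}^{n-1}(-1)^{n-s}\binom{n}{s}s=-2n\lambda a^{n-1}$, so the cancellations match.

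None of this uses that $W(A)$ is circular; we only use the form of $a_r$ and the identity \eqref{eq:WA_result}, so the conclusion holds for every $\lambda$. The main point needing care is the degree bound for $\mathcal{D}_3$ and $\mathcal{D}_4$, together with making sure no triple other than $(0,0,0)$ or $(0,1,0)$ reaches exponent $n$ or $n-1$. Since $i,j,k\ge 0$, the equation $2i+j+2k\in\{0,1\}$ forces $i=k=0$ and $j\in\{0,1\}$, which settles it.
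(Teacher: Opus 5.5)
Your proposal is correct and follows the paper's proof: you discard $\mathcal{D}_3$ and $\mathcal{D}_4$ by the positive-degree bound $n-2$, note that only the triples $(0,0,0)$ and $(0,1,0)$ reach the exponents $n$ and $n-1$ in $\mathcal{D}_1+\mathcal{D}_2$, and cancel them via \eqref{eq:D2-0jk}. Your explicit check using Corollary~\ref{cor:un_leading_coeff} and \eqref{eq:altsum-1-2} is a valid extra confirmation of the same cancellation.
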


\begin{proof}
By \eqref{eq:D3}, $\mathcal{D}_3=(e_n^{\top}H_\mu\alpha)\overline{\omega}$ has positive $\omega$-degree at most $n-2$.
By Corollary~\ref{cor:D4-degree}, $\mathcal{D}_4$ also has positive $\omega$-degree at most $n-2$.
Therefore the coefficients of $\omega^n$ and $\omega^{n-1}$ in $\det(A_\omega)$ coincide with those in
$\mathcal{D}_1+\mathcal{D}_2$.

The coefficient of $\omega^n$ can only come from the triple $(0,0,0)$ in $\mathcal{D}_1+\mathcal{D}_2$.
By Proposition~\ref{prop:Di-relation}\textnormal{(i)} with $j=k=0$,
\[
\mathcal{D}_1(0,0,0)+\mathcal{D}_2(0,0,0)
= \mathcal{D}_1(0,0,0)+\bigl(-\mathcal{D}_1(0,0,0)\bigr)=0.
\]
The coefficient of $\omega^{n-1}$ can only come from the triple $(0,1,0)$ in $\mathcal{D}_1+\mathcal{D}_2$.
By Proposition~\ref{prop:Di-relation}\textnormal{(i)} with $j=1$, $k=0$,
\[
\mathcal{D}_1(0,1,0)+\mathcal{D}_2(0,1,0)
= \mathcal{D}_1(0,1,0)+\bigl(-\mathcal{D}_1(0,1,0)\bigr)=0.
\]
Hence both leading coefficients vanish.
\end{proof}

The two leading coefficients vanish identically, so the singularity condition reduces to lower Laurent exponents,
where all four terms of \eqref{eq:D1234} may contribute.

\section{Critical Coefficient Analysis and Proof of the Main Theorem}\label{sec:proof}

For $n>3$, define
\begin{equation}\label{eq:pstar}
p^*:=2(\lfloor n/2\rfloor-1)=
\begin{cases}n-2, & n\text{ even},\\ n-3, & n\text{ odd}.\end{cases}
\end{equation}
This exponent is chosen so that the highest $\lambda$-power cancels (as explained in Remark~\ref{rem:p-star-choice} below), 
leaving a polynomial in $a$ whose coefficients have fixed sign. 
The following lemma shows that at the critical exponent $p^*$, defined in \eqref{eq:pstar}, 
the possible powers of $\lambda$ are restricted by parity.

\begin{lemma}\label{lem:parity}
Let $A_\omega=2\lambda I_n-\omega B-\overline{\omega}B^{\top}$ with $B=A-aI_n$
real and $|\omega|=1$.  In the multilinear row expansion of $\det(A_\omega)$,
each monomial contributing to the coefficient of $\omega^p$ carries a factor
$\lambda^{i}$ with
\[
i\ge 0\quad\text{and}\quad i\equiv n-p\pmod{2}.
\]
Consequently, at the critical exponent $p^*$ in \eqref{eq:pstar},
\[
p^*=n-2\ \Rightarrow\ i\in\{0,2\},\quad p^*=n-3\ \Rightarrow\ i\in\{1,3\}.
\]
\end{lemma}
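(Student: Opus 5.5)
The idea is to expand $\det(A_\omega)$ multilinearly row by row and track, for each monomial, the total exponent of $\omega$ against the number of diagonal $2\lambda$ factors. Write $A_\omega = 2\lambda I_n - \omega B - \overline{\omega}B^{\top}$. In row $r$, expand the $r$th row as a sum of three rows: $2\lambda e_r^{\top}$, $-\omega\, e_r^{\top}B$, and $-\overline{\omega}\, e_r^{\top}B^{\top}$. By multilinearity, $\det(A_\omega)$ is a sum over choice functions $c:\{1,\dots,n\}\to\{\lambda,+,-\}$ of
\[
(2\lambda)^{i}(-\omega)^{n_+}(-\overline{\omega})^{n_-}\det M_c,
\]
where $i,n_+,n_-$ count the rows assigned to each type, so that $i+n_++n_-=n$. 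The matrix $M_c$ has the corresponding rows of $I$, $B$ or $B^{\top}$, and its determinant is independent of $\omega$ and $\lambda$ (here we use that $B$ is real, so no conjugates appear inside $M_c$). On $|\omega|=1$ we have $\overline{\omega}=\omega^{-1}$, so the monomial has $\omega$-exponent $p=n_+-n_-$.

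From these two relations, $n-p = i + 2n_-$, hence $i\equiv n-p \pmod 2$ and $i\ge 0$. Further expanding $\det M_c$ into permutation products does not change $i$. Only the $\lambda$-power and the $\omega$-exponent matter, and both are fixed by the choice $c$. This proves the first assertion.

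For the consequence, take $p=p^*$. If $n$ is even, then $n-p^*=2$, so $i$ is even with $i=2-2n_-\le 2$, giving $i\in\{0,2\}$. If $n$ is odd, then $n-p^*=3$, so $i=3-2n_-$ is odd and at most $3$, giving $i\in\{1,3\}$. The upper bound $i\le n-p$ follows from $n_-\ge 0$, and is the point to state explicitly.

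The only subtlety is the identification $\overline{\omega}=\omega^{-1}$, which is what makes "coefficient of $\omega^p$" meaningful for the Laurent polynomial. Since the paper already treats $\det(A_\omega)$ as a Laurent polynomial (Remark~\ref{rem:formula-extends}), this is immediate.
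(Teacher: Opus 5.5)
Your proof is correct and is essentially the paper's argument: a row-by-row multilinear expansion into the three sources $2\lambda I_n$, $-\omega B$, $-\overline{\omega}B^{\top}$ gives $i=n-p-2n_-$, which yields the parity condition and the bound $0\le i\le n-p$, and hence $i\in\{0,2\}$ or $i\in\{1,3\}$ at $p^*$. One minor point: the reality of $B$ is not what keeps $\det M_c$ free of $\omega$ and $\lambda$, since the entries of $B$ never involve them; reality only lets you write $(A-aI_n)^{*}$ as $B^{\top}$ in the first place.
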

\begin{proof}
By multilinearity, each term of $\det(A_\omega)$ selects, row by row, one of
the three sources $2\lambda I_n$, $-\omega B$, $-\overline{\omega}B^{\top}$.
Let $i,j,k$ denote the number of rows taken from each source respectively.
Then $i+j+k=n$, and since $\overline{\omega}=\omega^{-1}$ the resulting scalar
factor contains $\lambda^{i}\omega^{j-k}$.  Fixing $j-k=p$ and substituting
$j=p+k$ into $i+j+k=n$ yields
\[
i=n-p-2k.
\]
Hence $i\equiv n-p\pmod 2$; together with $i,k\ge 0$, this leaves
$i\in\{0,2\}$ when $n-p=2$ and $i\in\{1,3\}$ when $n-p=3$. \qedhere
\end{proof}

\begin{theorem}\label{thm:main}
Let $A$ be an $n\times n$ ($n>3$) companion matrix whose spectrum consists of
a single eigenvalue $a\in\mathbb{C}$.  If $W(A)$ is a circular disk, then
$a=0$ and hence $A=J_n$.
\end{theorem}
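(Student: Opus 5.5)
Assume for contradiction that $a\neq 0$; by the rotation reduction of Section~\ref{sec:intro} we may take $a>0$, and the center of $W(A)$ is $a$ with radius $\lambda>0$. By \eqref{eq:det_zero} and Remark~\ref{rem:formula-extends}, every Laurent coefficient of $\det(A_\omega)=\mathcal D_1+\mathcal D_2+\mathcal D_3+\mathcal D_4$ vanishes. Corollary~\ref{cor:leading-vanishing} shows that the coefficients of $\omega^n$ and $\omega^{n-1}$ vanish identically, so they carry no information. The plan is therefore to examine the coefficient of $\omega^{p^*}$, with $p^*$ as in \eqref{eq:pstar}. By Lemma~\ref{lem:parity}, this coefficient is a polynomial in $\lambda$ involving only $\lambda^0,\lambda^2$ when $n$ is even and only $\lambda^1,\lambda^3$ when $n$ is odd. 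I would show that the top $\lambda$-power cancels and that what remains is $\lambda^{\epsilon}a^{m}$ times a polynomial in $a^2$ with nonzero coefficients of one fixed sign. Here $\epsilon=0$ or $1$ and $m\ge 0$.

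\textbf{Even case, $p^*=n-2$.} I would list the triples contributing to $\omega^{n-2}$ in each $\mathcal D_\ell$. In $\mathcal D_1$ these are $(0,0,1)$, $(0,2,0)$ and $(1,0,0)$. In $\mathcal D_2$ they are the same triples. In $\mathcal D_3$ only $(0,0,1)$ contributes, since $k\ge1$ is needed there. In $\mathcal D_4$ the contribution is given by \eqref{eq:D4-000}. Proposition~\ref{prop:Di-relation}(i) cancels $(0,0,1)$ and $(0,2,0)$ between $\mathcal D_1$ and $\mathcal D_2$; in particular the $\lambda^2$ term $(0,2,0)$ disappears. Proposition~\ref{prop:Di-relation}(ii) gives $\mathcal D_3(0,0,1)=-\mathcal D_1(0,0,1)$, and \eqref{eq:D2-1j0} with $j=0$ leaves $\mathcal D_1(1,0,0)+\mathcal D_2(1,0,0)=\mathcal D_1(1,0,0)/(n-1)=-a^{n-2}$. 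Collecting terms, the $\omega^{n-2}$ coefficient becomes
\[
-\,n a^{n-2}\;-\;a^{n-2}\;-\;a^{n-2}\sum_{k=2}^{n}T(n,k)a^{2k}
\]
(the first constant should be rechecked). Every summand has the same negative sign, so the coefficient cannot vanish for $a>0$. This is a contradiction.

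\textbf{Odd case, $p^*=n-3$.} The same procedure applies to the coefficient of $\omega^{n-3}$. The triples of total weight $3$ are $(0,1,1)$, $(0,3,0)$ and $(1,1,0)$. The $\lambda^3$ term $(0,3,0)$ cancels by \eqref{eq:D2-0jk}. The term $(0,1,1)$ cancels between $\mathcal D_1$ and $\mathcal D_2$ and is offset by $\mathcal D_3(0,1,1)$ through \eqref{eq:D3-0jk}. The term $(1,1,0)$ survives through \eqref{eq:D2-1j0} with $j=1$, which is admissible because $n+1$ is even. Together with \eqref{eq:D4-010}, this leaves $-2\lambda a^{n-3}$ times a polynomial in $a^2$ with positive coefficients: a constant part from the $\mathcal D_1,\mathcal D_2,\mathcal D_3$ remnants plus $\sum_k S(n,k)a^{2k}$. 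Positivity of $S(n,k)$ (Remark~\ref{rem:OEIS}) and $\lambda>0$ then force $a=0$.

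\textbf{Main obstacle.} The hardest step is the bookkeeping of $\mathcal D_3$ and $\mathcal D_4$ at these exponents. Two points need care: that the $\overline\omega$-part of $\mathcal D_4$, whose positive degree is at most $n-4$, cannot reach $\omega^{n-3}$ when $n$ is small, so the case $n=5$ needs checking; and that no $\mathcal D_3$ contributions arise with $i=1$. If the constant terms do not come out with a uniform sign, I would fall back on the observation that the highest power of $a$ has a nonzero coefficient. Combined with the analogous next-lower exponent, this would give two polynomial equations in $(a,\lambda)$, and I would eliminate $\lambda$ between them.
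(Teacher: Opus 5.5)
Your proposal is correct and follows the paper's proof essentially step for step. You assemble the coefficient at $\omega^{p^*}$ from Proposition~\ref{prop:Di-relation} and Lemma~\ref{lem:D4-coef}, obtaining $-a^{n-2}\bigl(1+na^{2}+\sum_{k=2}^{n}T(n,k)a^{2k}\bigr)$ for even $n$ and $-2\lambda a^{n-3}\bigl(2+n(n-1)a^{2}+\sum_{k=2}^{n}S(n,k)a^{2k}\bigr)$ for odd $n$, and their fixed sign gives the contradiction. The constant you flagged is indeed $-na^{n}$, since the $(0,0,1)$ monomial carries $a^{n}$, not $a^{n-2}$; the worries in your last paragraph are moot, because the $\overline{\omega}$-part of $\mathcal D_4$ has top degree $n-4<n-3$ for every $n$, and the exponent count forces $i=0$ in every $\mathcal D_3$ contribution at these exponents, so no fallback is needed.
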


\begin{proof}
By the rotation reduction established in Section~\ref{sec:intro}, the center of $W(A)$ is $a$ and we may assume $a\ge 0$. 
Because $A$ is a companion matrix of size $n>3$, it is not a scalar multiple of the identity, hence $W(A)$ cannot reduce to a single point; 
the disk's radius $\lambda$ is therefore strictly positive. 

Since $\det(A_\omega)=0$ for every $|\omega|=1$, each Laurent coefficient of $\det(A_\omega)$ vanishes. 
We will use this property to show that $a=0$ by contradiction. To this end, we first suppose that $a>0$.

The coefficient of $\det(A_\omega)$ at the critical exponent $p^*$ is recorded in
Table~\ref{tab:even-front} (even $n$) and Table~\ref{tab:odd-front} (odd $n$), 
where the $\mathcal{D}_4$ entries are taken directly from Lemma~\ref{lem:D4-coef}, 
using the arrays $T(n,k)$ and $S(n,k)$ defined in \eqref{eq:cd-def}.

Tables~\ref{tab:even-front} and~\ref{tab:odd-front} list each $(i,j,k)$-triple contributing to the respective coefficient,
together with the value of the contribution to the power $\omega^{n-2}$ for even $n$ and $\omega^{n-3}$ for odd $n$, respectively.
For the rows corresponding to $\mathcal{D}_1,\mathcal{D}_2,\mathcal{D}_3$, the listed triples follow the convention of
Proposition~\ref{prop:Di-relation}: they contribute to the coefficient of $\omega^{n-2i-j-2k}$ in the corresponding term.
For the $\mathcal{D}_4$ row, the displayed triple records the Chebyshev coefficient used in Lemma~\ref{lem:D4-coef}.
By Lemma~\ref{lem:parity}, only $\lambda^0,\lambda^2$ can appear at $\omega^{n-2}$ (even $n$) and
only $\lambda^1,\lambda^3$ at $\omega^{n-3}$ (odd $n$).

\begin{table}[ht]
\centering
\small
\renewcommand{\arraystretch}{1.4}
\begin{tabular}{clcc}
\toprule
Term & Contribution & $(i,j,k)$ & $\omega^{n-2}$-coefficient \\
\midrule
\multirow{3}{*}{$\mathcal{D}_1$} & \multirow{3}{*}{$u_n$}
  & $(1,0,0)$ & $-(n-1)a^{n-2}$ \\
& & $(0,2,0)$ & $2n(n-1)\lambda^{2}a^{n-2}$ \\
& & $(0,0,1)$ & $na^{n}$ \\
\midrule
\multirow{3}{*}{$\mathcal{D}_2$}
& \multirow{3}{*}{$\displaystyle\sum_{s=1}^{n-1}
  (-1)^{n-s}\tbinom{n}{s}a^{n-s}u_s\,\omega^{n-s}$}
& $(1,0,0)$ & $(n-2)\,a^{n-2}$ \\[2pt]
& & $(0,2,0)$ & $-2n(n-1)\lambda^{2}a^{n-2}$ \\[2pt]
& & $(0,0,1)$ & $-na^{n}$ \\
\midrule
$\mathcal{D}_3$ & $-na\,u_{n-1}\overline{\omega}$
  & $(0,0,1)$ & $-na^{n}$ \\
\midrule
$\mathcal{D}_4$ & (Lemma~\ref{lem:D4-coef})
  & $(0,0,0)$ & $-a^{n-2}{\displaystyle\sum_{k=2}^{n}T(n,k)\,a^{2k}}$ \\
\bottomrule
\end{tabular}
\caption{Contributions to the $\omega^{n-2}$-coefficient of $\det(A_\omega)$ for even $n$
($\mathcal{D}_1,\mathcal{D}_2,\mathcal{D}_3$: Proposition~\ref{prop:Di-relation}; $\mathcal{D}_4$: Lemma~\ref{lem:D4-coef}).}
\label{tab:even-front}
\end{table}

\begin{table}[ht]
\centering
\small
\renewcommand{\arraystretch}{1.4}
\begin{tabular}{clcc}
\toprule
Term & Contribution & $(i,j,k)$ & $\omega^{n-3}$-coefficient \\
\midrule
\multirow{3}{*}{$\mathcal{D}_1$} & \multirow{3}{*}{$u_n$}
  & $(1,1,0)$ & $-2(n-1)(n-2)\lambda a^{n-3}$ \\
& & $(0,3,0)$ & $\tfrac{4}{3}n(n-1)(n-2)\lambda^{3}a^{n-3}$ \\
& & $(0,1,1)$ & $2n(n-1)\lambda a^{n-1}$ \\
\midrule
\multirow{3}{*}{$\mathcal{D}_2$}
& \multirow{3}{*}{$\displaystyle\sum_{s=1}^{n-1}
  (-1)^{n-s}\tbinom{n}{s}a^{n-s}u_s\,\omega^{n-s}$}
  & $(1,1,0)$ & $2\bigl((n-1)(n-2)-2\bigr)\,\lambda a^{n-3}$ \\[2pt]
& & $(0,3,0)$ & $-\tfrac{4}{3}n(n-1)(n-2)\lambda^{3}a^{n-3}$ \\[2pt]
& & $(0,1,1)$ & $-2n(n-1)\lambda a^{n-1}$ \\
\midrule
$\mathcal{D}_3$ & $-na\,u_{n-1}\overline{\omega}$
  & $(0,1,1)$ & $-2n(n-1)\lambda a^{n-1}$ \\
\midrule
$\mathcal{D}_4$ & (Lemma~\ref{lem:D4-coef})
  & $(0,1,0)$ & $-2\lambda a^{n-3}{\displaystyle\sum_{k=2}^{n}S(n,k)\,a^{2k}}$ \\
\bottomrule
\end{tabular}
\caption{Contributions to the $\omega^{n-3}$-coefficient of $\det(A_\omega)$ for odd $n$
($\mathcal{D}_1,\mathcal{D}_2,\mathcal{D}_3$: Proposition~\ref{prop:Di-relation}; $\mathcal{D}_4$: Lemma~\ref{lem:D4-coef}).}
\label{tab:odd-front}
\end{table}

The $\mathcal{D}_1$ entries in both tables follow directly from \eqref{eq:un-expansion} by substituting each triple $(i,j,k)$.
The values of $\mathcal{D}_2$ and $\mathcal{D}_3$ follow from Proposition~\ref{prop:Di-relation} as multiples of the
corresponding value $\mathcal{D}_1(i,j,k)$, while the $\mathcal{D}_4$ entries are given directly by Lemma~\ref{lem:D4-coef}.

\medskip\noindent\textbf{Case 1: $n$ is even.}
For the even case, we consider the power $\omega^{p^{*}} = \omega^{n-2}$. 
Since $\det(A_\omega)=0$, the coefficient of $\omega^{n-2}$, obtained by summing Table~\ref{tab:even-front}, vanishes, i.e.,
\begin{equation}\label{eq:coef-even}
-a^{n-2}\Biggl(1+na^{2}+\sum_{k=2}^{n}T(n,k)\,a^{2k}\Biggr) = 0.
\end{equation}
Each coefficient $T(n,k)=\binom{n-1}{k-1}\binom{n}{k}$ is a strictly positive integer for all $2 \le k \le n$ and $n > 3$, 
so the parenthesized expression in \eqref{eq:coef-even} is bounded below by $1$ for all $a>0$. 
Hence $a^{n-2}=0$, which implies $a=0$, a contradiction.

\medskip\noindent\textbf{Case 2: $n$ is odd.}
For the odd case, we consider the power $\omega^{p^{*}} = \omega^{n-3}$. 
The vanishing of $\det(A_\omega)$ implies that the $\omega^{n-3}$-coefficient, summed from Table~\ref{tab:odd-front}, equals zero:
\begin{equation}\label{eq:coef-odd}
-2\lambda\,a^{n-3}\Biggl(2+n(n-1)a^{2}+\sum_{k=2}^{n}S(n,k)\,a^{2k}\Biggr) = 0.
\end{equation}
Moreover, for $2\le k\le n$ and odd $n\ge 5$,
\[
S(n,k)=\left( n-1 - \frac{k-1}{n-1} \right)\binom{n-1}{k-1}\binom{n}{k} \ge \frac{(n-1)(n-2)}{n-1} \binom{n-1}{k-1}\binom{n}{k} > 0,
\]
so the parenthesized expression in \eqref{eq:coef-odd} is bounded below by $2$ for all $a>0$. 
Since $\lambda>0$, we obtain $a^{n-3}=0$, hence $a=0$, a contradiction.

In both cases, $a=0$, which is equivalent to $A=J_n$. \qedhere
\end{proof}

\begin{remark}\label{rem:p-star-choice}
By Lemma~\ref{lem:parity}, the $\omega^{n-2}$-coefficient can a priori contain
$\lambda^0$ and $\lambda^2$ terms, and the $\omega^{n-3}$-coefficient can
contain $\lambda^1$ and $\lambda^3$ terms.  The closed forms
\eqref{eq:coef-even} and \eqref{eq:coef-odd} show that in both cases the
highest $\lambda$-power cancels: only $\lambda^0$ survives at $\omega^{n-2}$
and only $\lambda^1$ at $\omega^{n-3}$.  This cancellation, arising from
Lemma~\ref{lem:altsum-shifted}, is the reason $p^*$ is the exponent at which 
the sign argument applies directly.
\end{remark}

\section{Extension to Partial Zero Spectra}\label{sec:partial-zeros}

We next consider companion matrices whose spectrum consists of $m$ copies of $a$ and $n-m$ copies of $0$:
\begin{equation*}
\sigma(A)=\{0^{\,n-m},a^m\},\qquad 1\le m\le n-1.
\end{equation*}
The corresponding characteristic polynomial is $p(z)=z^{n-m}(z-a)^m$, and its coefficients are given by the truncated binomial formula:
\begin{equation*}
a_j=
\begin{cases}
(-1)^j\binom{m}{j} a^j, & 1\le j\le m,\\[2mm]
0, & m<j\le n.
\end{cases}
\end{equation*}

We split the analysis by the spectral multiplicity $m$.
For $m=1$, if $a=0$ there is nothing to prove. Assume $a\ne0$; then $a$ is a simple eigenvalue,
and by \cite{Wu11} it cannot be the center of $W(A)$, because the center of a circular
numerical range must be an eigenvalue whose algebraic multiplicity exceeds its geometric
multiplicity. Hence the center must be $0$, so $W(A)$ is a disk centered at the origin, and
\cite{GauWu04} gives $A = J_n$, so $a=0$, a contradiction.
For $3\le m\le n-1$, the $\omega^{n-2}$-coefficient of $\det(A_\omega)$ alone suffices to derive a contradiction; 
for $m=2$, the two coefficients $\omega^{n-2}$ and $\omega^{n-3}$ are both required.

For the cases $2\le m\le n-1$, the four-term decomposition established in Section~\ref{sec:framework} again applies:
\begin{equation*}
\det(A_\omega) = \mathcal{D}_1+\mathcal{D}_2+\mathcal{D}_3+\mathcal{D}_4.
\end{equation*}
The truncation condition $a_j=0$ for $j>m$ restricts the relevant index sets.

For $3\le m\le n-1$, the $\lambda^2$-terms in the coefficient of $\omega^{n-2}$ occur only in 
$\mathcal D_1+\mathcal D_2+\mathcal D_3$ and cancel by Lemma~\ref{lem:altsum-shifted}, 
since the relevant polynomial factors have degree $2 < m$. 
The term $\mathcal D_4$ contributes no $\lambda$-dependent part at this exponent. 
For $m=2$, this cancellation fails, and the coefficient of $\omega^{n-3}$ is also needed in order to deduce $a=0$.

\begin{proposition}\label{prop:partial}
Let $A$ be the $n\times n$ companion matrix associated with
\[
p(z) = z^{n-m}(z-a)^m, \qquad 2 \le m \le n-1.
\]
\begin{enumerate}
\item[(i)] If $3\le m\le n-1$, then the coefficient of $\omega^{n-2}$ in $\det(A_\omega)$ is
\begin{equation}\label{eq:partial-coef-i}
-a^n\left(\sum_{\ell=1}^{m}T(m,\ell)\,a^{2(\ell-1)}\right).
\end{equation}
\item[(ii)] If $m=2$, then both the coefficients of $\omega^{n-2}$ and $\omega^{n-3}$ in $\det(A_\omega)$ are given respectively by
\begin{align}
& -a^{n-2}\bigl(a^{4}+2a^{2}-4\lambda^{2}\bigr), \label{eq:partial-coef-ii-P}\\[2pt]
& -2\lambda a^{n-3}\Bigl((n-2)\bigl(a^{4}+2a^{2}-4\lambda^{2}\bigr)+2\Bigr). \label{eq:partial-coef-ii-Q}
\end{align}
\end{enumerate}
\end{proposition}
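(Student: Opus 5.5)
I will reuse the four-term decomposition \eqref{eq:D1234}, which holds for any companion matrix, and substitute the truncated coefficients $a_r=(-1)^r\binom{m}{r}a^r$ for $r\le m$ and $a_r=0$ for $r>m$. Equivalently, $\alpha_i=a_{n+1-i}$ is nonzero only for $i\ge n+1-m$. The plan is to extract the coefficients of $\omega^{n-2}$ and $\omega^{n-3}$ term by term, as in Tables~\ref{tab:even-front} and~\ref{tab:odd-front}, and then add them.

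\textbf{The terms $\mathcal D_1$, $\mathcal D_2$, $\mathcal D_3$.} The term $\mathcal D_1=u_n$ is unchanged. Its contributions are read off from \eqref{eq:un-expansion} using the triples $(1,0,0)$, $(0,2,0)$, $(0,0,1)$ at $\omega^{n-2}$, and $(1,1,0)$, $(0,3,0)$, $(0,1,1)$ at $\omega^{n-3}$.

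For $\mathcal D_2=\sum_{s}a_{n-s}u_s\omega^{n-s}$, only $s\ge n-m$ survive. I will redo the sums of Proposition~\ref{prop:Di-relation}(i) with $\binom{n}{s}$ replaced by $\binom{m}{n-s}$. Setting $r=n-s$, each contribution becomes an alternating sum $\sum_{r=1}^{m}(-1)^r\binom{m}{r}P(n-r)$, where $P$ is a polynomial in $s$ of degree $j+k$ (or, for $i=1$, the polynomial $(s-1)\binom{s-2}{j}$). By Lemma~\ref{lem:altsum-shifted}, such a sum equals $-P(n)$ whenever $\deg P<m$.
\begin{itemize}
\item For $m\ge3$, every polynomial needed at $\omega^{n-2}$ has degree at most $2$. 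So $\mathcal D_2$ contributes exactly $-\mathcal D_1$ on these triples, and all $\lambda^2$ terms cancel.
\item For $m=2$, the $(0,2,0)$ polynomial has degree $2=m$, so a residual term $\binom{2}{2}\cdot(\text{leading coefficient})$ survives. I expect this to produce the $4\lambda^2 a^{n-2}$ term.
\end{itemize}

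For $\mathcal D_3$, the same reindexing as in Proposition~\ref{prop:Di-relation}(ii) applies with $\binom{n}{r}$ replaced by $\binom{m}{r}$. At $\omega^{n-2}$ only $r=1$ contributes, giving $-ma\,[\omega^{n-1}]u_{n-1}=-ma^n$. At $\omega^{n-3}$ the analogous $(0,1,1)$ term gives $-2m(n-1)\lambda a^{n-1}$.

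\textbf{The term $\mathcal D_4$.} Here I repeat the proof of Lemma~\ref{lem:D4-coef}, using only the $r=0$ Chebyshev term. The coefficient at $\omega^{n-2}$ is $-\sum_{s}a^{s-2}\sum_t\alpha_{s-t}\alpha_{n-t}$, and truncation forces $s-t\ge n+1-m$ and $n-t\ge n+1-m$. After reindexing, the inner sum becomes a partial alternating sum of $\binom{m}{q}$. By \eqref{eq:partial-altsum} this collapses to $\binom{m-1}{\ell-1}$, and the whole expression should reorganize into $-a^{n}\sum_{\ell=2}^{m}T(m,\ell)a^{2(\ell-1)}$. The residual pieces from $\mathcal D_1+\mathcal D_2+\mathcal D_3$ supply the missing $\ell=1$ term $-a^n$ (after the $(0,0,1)$ pieces combine to $na^n-na^n-ma^n$ together with the $(1,0,0)$ residual), which completes \eqref{eq:partial-coef-i}.

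For $m=2$, I will simply enumerate the few nonzero $\alpha$'s: $\alpha_{n-1}=-2a$ and $\alpha_n=a^2$. Then $\mathcal D_4$ has only a handful of terms, and I will compute its $\omega^{n-2}$ and $\omega^{n-3}$ coefficients directly. In that computation, $u_{s-2}$ contributes both its leading coefficient $a^{s-2}$ and its subleading coefficient $2(s-2)\lambda a^{s-3}$ (Corollary~\ref{cor:un_leading_coeff}). Summing the contributions should yield \eqref{eq:partial-coef-ii-P} and \eqref{eq:partial-coef-ii-Q}.

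\textbf{Main obstacle.} The main difficulty is bookkeeping in the $\mathcal D_4$ reindexing under truncation. The range of $t$ shrinks, and the partial alternating sums must be matched to the shifted index $\ell$ so that the $T(m,\ell)$ structure appears; an off-by-one error there would break the result. I will check the final formulas against small cases, for instance $n=4$ with $m=3$, and $n=3,4$ with $m=2$, by expanding $\det(A_\omega)$ directly.
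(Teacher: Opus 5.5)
Your plan follows the paper's proof. It uses the same four-term decomposition, handles $\mathcal D_2$ by alternating sums with binomial base $m$ via Lemma~\ref{lem:altsum-shifted}, and takes $\mathcal D_3$ from its single $s=n-1$ term. It reduces $\mathcal D_4$ at $\omega^{n-2}$ to leading coefficients, collapsed by \eqref{eq:partial-altsum} into $T(m,\ell)$, and enumerates directly for $m=2$. However, the $m=2$ step fails as written.

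Since $\alpha_i=a_{n+1-i}$, as you state yourself, we have $\alpha_n=a_1=-2a$ and $\alpha_{n-1}=a_2=a^2$; you have the two swapped. In \eqref{eq:D4}, $\alpha_{n-t}\ne0$ with $t\ge1$ forces $t=1$. Then $\alpha_{s-1}\ne0$ with $s\le n$ forces $s=n$. So $\mathcal D_4=-\alpha_{n-1}^2u_{n-2}$, and the $\overline{\omega}$-part is empty. With the correct value this is $-a^4u_{n-2}$, contributing $-a^{n+2}$ at $\omega^{n-2}$ and $-2(n-2)\lambda a^{n+1}$ at $\omega^{n-3}$. With your values it becomes $-4a^2u_{n-2}$. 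The $\omega^{n-2}$-coefficient then comes out as $4\lambda^2a^{n-2}-6a^n$ instead of \eqref{eq:partial-coef-ii-P}, and \eqref{eq:partial-coef-ii-Q} fails the same way. Your planned small-case check would expose this, but the computation as proposed does not give the stated formulas.

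Three smaller bookkeeping points.

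\begin{enumerate}
\item In part (i), the $\ell=1$ term of \eqref{eq:partial-coef-i} is $-T(m,1)a^n=-ma^n$, not $-a^n$. Your parenthetical $na^n-na^n-ma^n$ has the right value.

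\item There is no $(1,0,0)$ residual to add. Because $s\ge n-m\ge1$, the $s=0$ boundary term behind the correction factor in \eqref{eq:D2-1j0} is absent. So the $(1,0,0)$ contributions of $\mathcal D_1$ and $\mathcal D_2$ cancel exactly, as your own degree argument already shows.

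\item For $m=2$ at $\omega^{n-3}$, your residual rule covers only polynomials of degree at most $m$. The $(1,1,0)$ and $(0,1,1)$ polynomials have degree $2=m$, and their combined $\mathcal D_1+\mathcal D_2$ residuals are $-4\lambda a^{n-3}$ and $4\lambda a^{n-1}$. The $(0,3,0)$ polynomial has degree $3>m$. Its residual is the full second difference $\binom{n}{3}-2\binom{n-1}{3}+\binom{n-2}{3}=n-2$, which gives the $8(n-2)\lambda^3a^{n-3}$ term. Direct enumeration, as in Table~\ref{tab:coef-n3}, takes care of this.
\end{enumerate}
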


\begin{proof}
We use the decomposition \eqref{eq:D1234} with $a_j=(-1)^j\binom{m}{j}a^j$ for $1\le j\le m$ and $a_j=0$ for $j>m$.

\medskip\noindent\textbf{Part (i): $3\le m\le n-1$.}
Unlike the single-eigenvalue case, neither Proposition~\ref{prop:Di-relation} nor Lemma~\ref{lem:D4-coef}
applies directly here, since both were derived under the assumption $a_j=(-1)^j\binom{n}{j}a^j$ for all
$1\le j\le n$, whereas the truncation $a_j=0$ for $j>m$ alters the combinatorial structure.
Nevertheless, the computation of the $\omega^{n-2}$-coefficient proceeds in close analogy with the single-eigenvalue case:
the contribution of $\mathcal{D}_1$ is obtained directly from the same Chebyshev expansion as before, since
$\mathcal{D}_1=u_n$ does not depend on the companion coefficients and hence retains binomial base $n$.
For $\mathcal{D}_2$ and $\mathcal{D}_3$, the truncation restricts the summation range to $s\in\{n-m,\ldots,n-1\}$,
and Lemmas~\ref{lem:altsum}--\ref{lem:altsum-shifted} are applied with binomial base $m$ in place of $n$.
The $\mathcal{D}_4$ term is then computed from the same positive-power reduction as in Lemma~\ref{lem:D4-coef},
but with the nonzero companion coefficients truncated at degree $m$.
The resulting contributions are recorded in Table~\ref{tab:partial-front}.

\begin{table}[ht]
\centering
\small
\renewcommand{\arraystretch}{1.4}
\begin{tabular}{clcc}
\toprule
Term & Contribution & $(i,j,k)$ & $\omega^{n-2}$-coefficient \\
\midrule
\multirow{3}{*}{$\mathcal{D}_1$} & \multirow{3}{*}{$u_n$}
  & $(1,0,0)$ & $-(n-1)a^{n-2}$ \\
& & $(0,2,0)$ & $2n(n-1)\lambda^{2}a^{n-2}$ \\
& & $(0,0,1)$ & $na^{n}$ \\
\midrule
\multirow{3}{*}{$\mathcal{D}_2$}
& \multirow{3}{*}{$\displaystyle\sum_{s=n-m}^{n-1}(-1)^{n-s}\tbinom{m}{n-s}a^{n-s}u_s\omega^{n-s}$}
  & $(1,0,0)$ & $(n-1)\,a^{n-2}$ \\[2pt]
& & $(0,2,0)$ & $-2n(n-1)\lambda^{2}a^{n-2}$ \\[2pt]
& & $(0,0,1)$ & $-na^{n}$ \\
\midrule
$\mathcal{D}_3$ & $-ma\,u_{n-1}\overline{\omega}$
  & $(0,0,1)$ & $-ma^{n}$ \\
\bottomrule
\end{tabular}
\caption{Contributions to the $\omega^{n-2}$-coefficient of
$\mathcal{D}_1+\mathcal{D}_2+\mathcal{D}_3$ for $3\le m\le n-1$
(triple $(i,j,k)$ as in Proposition~\ref{prop:Di-relation}; $\mathcal{D}_4$ computed separately below).}
\label{tab:partial-front}
\end{table}

Summing Table~\ref{tab:partial-front}, the contribution of $\mathcal{D}_1+\mathcal{D}_2+\mathcal{D}_3$ 
to the coefficient of $\omega^{n-2}$ is 
\begin{equation}\label{eq:partial-front-three-app}
-m\,a^{n}.
\end{equation}

For $\mathcal{D}_4$, the same Chebyshev expansion and index substitution
as in the proof of Lemma~\ref{lem:D4-coef} apply,
with the binomial base $n$ replaced by $m$.
Indeed, in the positive-power part of \eqref{eq:D4}, the coefficient of
$\omega^{n-2}$ is obtained from the leading term of $u_{s-t-1}u_{t-1}$
and is therefore
\[
-\sum_{s=2}^{n}a^{s-2}\sum_{t=1}^{s-1}\alpha_{s-t}\,\alpha_{n-t}.
\]
Since $\alpha_q=a_{n+1-q}$, set $r=n+1-(s-t)$, $\ell=t+1$.
Then $\alpha_{s-t}=a_r$, $\alpha_{n-t}=a_\ell$, and $r-\ell=n-s$.
The truncation $a_j=0$ for $j>m$ leaves precisely $2\le\ell\le r\le m$.
Thus the contribution becomes
\[
-\sum_{\ell=2}^{m}\sum_{r=\ell}^{m}(-1)^{r+\ell}\binom{m}{r}\binom{m}{\ell}
a^{n+2(\ell-1)}.
\]
Using
\[
\sum_{r=\ell}^{m}(-1)^{r+\ell}\binom{m}{r}
=\binom{m-1}{\ell-1},
\]
we obtain
\[
-a^{n}\sum_{\ell=2}^{m}\binom{m-1}{\ell-1}\binom{m}{\ell}\,a^{2(\ell-1)}.
\]
With the notation $T(m,\ell)$ from \eqref{eq:cd-def}, this is precisely
\begin{equation}\label{eq:partial-D4-app}
-a^{n}\left(\sum_{\ell=2}^{m}T(m,\ell)\,a^{2(\ell-1)}\right).
\end{equation}
Adding \eqref{eq:partial-front-three-app} and \eqref{eq:partial-D4-app}, 
and using $T(m,1)=m$, gives \eqref{eq:partial-coef-i}.

\medskip\noindent\textbf{Part (ii): $m=2$.}
When $m=2$, the companion coefficients are $a_1 = -2a$, $a_2 = a^2$ and $a_j = 0$ for $j = 3,\ldots, n$. 
Thus the four terms in the decomposition specialize to
\begin{align*}
\mathcal{D}_1 &= u_{n},\\
\mathcal{D}_2 &= a_{2}u_{n-2}\omega^{2} + a_{1}u_{n-1}\omega 
               = a^{2}u_{n-2}\omega^{2} - 2au_{n-1}\omega,\\
\mathcal{D}_3 &= a_{2}u_{n-2}\overline{\omega}^{2} + a_{1}u_{n-1}\overline{\omega} 
               = a^{2}u_{n-2}\overline{\omega}^{2} - 2au_{n-1}\overline{\omega},\\
\mathcal{D}_4 &= -a^{4}u_{n-2}.
\end{align*}
That is
\begin{equation*}
\det(A_{\omega}) = u_n + \big(a^{2}u_{n-2}\omega^2 - 2au_{n-1}\omega\big) 
                   + \big(a^2u_{n-2}\overline{\omega}^{2} - 2au_{n-1}\overline{\omega}\big) + \big(- a^{4}u_{n-2}\big).
\end{equation*}

When $m=2$, the $\lambda^2$-terms do not cancel at a single Laurent exponent. 
Each summand in the displayed formula has the form $c\cdot u_s\cdot\omega^t$.
To compute the coefficient of $\omega^{n-p}$, where $p=2$ or $p=3$, we use the expansion \eqref{eq:D1} of $u_s$.
The relevant triple $(i,j,k)$ is determined by $2i+j+2k=p+s+t-n$. 
The tables below record the corresponding contributions for $p=2$ and $p=3$.

\smallskip\noindent\emph{The coefficient of $\omega^{n-2}$.}
For each term $c\cdot u_s\cdot\omega^t$, the relevant triple satisfies $2i+j+2k=2+s+t-n$ in \eqref{eq:un-expansion}. 

\begin{table}[ht]
\centering
\small
\renewcommand{\arraystretch}{1.4}
\begin{tabular}{clcc}
\toprule
Term & Contribution & $(i,j,k)$ & $\omega^{n-2}$-coefficient \\
\midrule
\multirow{3}{*}{$\mathcal{D}_1$} & \multirow{3}{*}{$u_n$}
  & $(1,0,0)$ & $-(n-1)a^{n-2}$ \\
& & $(0,2,0)$ & $2n(n-1)\lambda^{2}a^{n-2}$ \\
& & $(0,0,1)$ & $na^n$ \\
\midrule
\multirow{6}{*}{$\mathcal{D}_2$} & \multirow{3}{*}{$a^2u_{n-2}\omega^2$}
  & $(1,0,0)$ & $-(n-3)a^{n-2}$ \\
& & $(0,2,0)$ & $2(n-2)(n-3)\lambda^{2}a^{n-2}$ \\
& & $(0,0,1)$ & $(n-2)a^n$ \\
\cline{2-4}
& \multirow{3}{*}{$-2au_{n-1}\omega$}
  & $(1,0,0)$ & $2(n-2)a^{n-2}$ \\
& & $(0,2,0)$ & $-4(n-1)(n-2)\lambda^{2}a^{n-2}$ \\
& & $(0,0,1)$ & $-2(n-1)a^n$ \\
\midrule
$\mathcal{D}_3$ & $-2au_{n-1}\overline{\omega}$
  & $(0,0,0)$ & $-2a^n$ \\
\midrule
$\mathcal{D}_4$ & $-a^4u_{n-2}$
  & $(0,0,0)$ & $-a^{n+2}$ \\
\bottomrule
\end{tabular}
\caption{Contributions to the coefficient of $\omega^{n-2}$ in $\det(A_\omega)$ for $m=2$
(triple $(i,j,k)$ refers to the expansion of $u_s$ itself, satisfying $2i+j+2k=2+s+t-n$).}
\label{tab:coef-n2}
\end{table}

Summing Table~\ref{tab:coef-n2} by monomial type: the $a^{n-2}$ terms cancel, 
the $\lambda^2 a^{n-2}$ terms sum to $4$, the $a^n$ terms sum to $-2$, 
and $\mathcal{D}_4$ contributes the separate monomial $-a^{n+2}$. 
Therefore, the coefficient of $\omega^{n-2}$ in $\det(A_\omega)$ is
\[
4\lambda^{2}a^{n-2} - 2a^{n} - a^{n+2} = -a^{n-2}(a^4+2a^2-4\lambda^2),
\]
which is precisely \eqref{eq:partial-coef-ii-P}.

For $m=2$, only the coefficients $a_1$ and $a_2$ are nonzero. 
Consequently, the $\mathcal D_4$ contribution is concentrated in a single index pattern. 
Unlike Table~\ref{tab:even-front}, the $\lambda^2$ terms no longer cancel here, 
and thus the coefficient of $\omega^{n-3}$ is also used.

\smallskip\noindent\emph{The coefficient of $\omega^{n-3}$.}
For each term $c\cdot u_s\cdot\omega^t$, the relevant triple satisfies $2i+j+2k=3+s+t-n$ in \eqref{eq:un-expansion}.
Table~\ref{tab:coef-n3} is the $\omega^{n-3}$ counterpart of Table~\ref{tab:coef-n2} for $m=2$. 
The two resulting coefficient equations are combined in the proof of Theorem~\ref{thm:partial}.

\begin{table}[ht]
\centering
\small
\renewcommand{\arraystretch}{1.4}
\begin{tabular}{clcc}
\toprule
Term & Contribution & $(i,j,k)$ & $\omega^{n-3}$-coefficient \\
\midrule
\multirow{3}{*}{$\mathcal{D}_1$} & \multirow{3}{*}{$u_n$}
  & $(1,1,0)$ & $-2(n-1)(n-2)\lambda a^{n-3}$ \\
& & $(0,3,0)$ & $\tfrac{4}{3}n(n-1)(n-2)\lambda^3 a^{n-3}$ \\
& & $(0,1,1)$ & $2n(n-1)\lambda a^{n-1}$ \\
\midrule
\multirow{6}{*}{$\mathcal{D}_2$} & \multirow{3}{*}{$a^2u_{n-2}\omega^2$}
  & $(1,1,0)$ & $-2(n-3)(n-4)\lambda a^{n-3}$ \\
& & $(0,3,0)$ & $\tfrac{4}{3}(n-2)(n-3)(n-4)\lambda^3 a^{n-3}$ \\
& & $(0,1,1)$ & $2(n-2)(n-3)\lambda a^{n-1}$ \\
\cline{2-4}
& \multirow{3}{*}{$-2au_{n-1}\omega$}
  & $(1,1,0)$ & $4(n-2)(n-3)\lambda a^{n-3}$ \\
& & $(0,3,0)$ & $-\tfrac{8}{3}(n-1)(n-2)(n-3)\lambda^3 a^{n-3}$ \\
& & $(0,1,1)$ & $-4(n-1)(n-2)\lambda a^{n-1}$ \\
\midrule
$\mathcal{D}_3$ & $-2au_{n-1}\overline{\omega}$
  & $(0,1,0)$ & $-4(n-1)\lambda a^{n-1}$ \\
\midrule
$\mathcal{D}_4$ & $-a^4u_{n-2}$
  & $(0,1,0)$ & $-2(n-2)\lambda a^{n+1}$ \\
\bottomrule
\end{tabular}
\caption{Contributions to the coefficient of $\omega^{n-3}$ in $\det(A_\omega)$ for $m=2$
(triple $(i,j,k)$ refers to the expansion of $u_s$ itself, satisfying $2i+j+2k=3+s+t-n$).}
\label{tab:coef-n3}
\end{table}

Summing Table~\ref{tab:coef-n3} by monomial type: the $\lambda a^{n-3}$ terms 
sum to $-4$, the $\lambda^3 a^{n-3}$ terms sum to $8(n-2)$, the $\lambda a^{n-1}$ 
terms sum to $-4(n-2)$, and $\mathcal{D}_4$ contributes the separate monomial 
$-2(n-2)\lambda a^{n+1}$. Factoring, the coefficient of $\omega^{n-3}$ in 
$\det(A_\omega)$ equals
\begin{align*}
 &\ -4\lambda a^{n-3}+8(n-2)\lambda^3 a^{n-3}-4(n-2)\lambda a^{n-1}-2(n-2)\lambda a^{n+1}\\
=&\ -2\lambda a^{n-3}\Big((n-2)(a^4+2a^2-4\lambda^2)+2\Big),
\end{align*}
which is precisely \eqref{eq:partial-coef-ii-Q}.\qedhere
\end{proof}

\begin{theorem}\label{thm:partial}
Let $A$ be an $n\times n$ companion matrix with $n>3$ and spectrum
\[
\sigma(A)=\{\underbrace{0,\ldots,0}_{\scriptstyle n-m},\ \underbrace{a,\ldots, a}_{\scriptstyle  m}\},\quad 2\le m\le n-1.
\]
If $W(A)$ is a circular disk, then $a=0$.
\end{theorem}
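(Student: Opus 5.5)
We argue by contradiction. Assume $a\neq 0$. As in the proof of Theorem~\ref{thm:main}, we first locate the center and radius of the disk. The center of a circular $W(A)$ must be an eigenvalue, so it is either $0$ or $a$.

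If the center is $0$, then $W(A)$ is a disk centered at the origin, and \cite{GauWu04} forces $A=J_n$. The characteristic polynomial is then $z^n$, so $a=0$, a contradiction. Hence the center is $a$. After the rotation reduction of Section~\ref{sec:intro} we may take $a>0$, and the radius satisfies $\lambda>0$. Then $\det(A_\omega)$ vanishes on the unit circle, so every Laurent coefficient vanishes. The case $m=1$ was already disposed of in the discussion preceding Proposition~\ref{prop:partial}. That leaves $2\le m\le n-1$, which we split according to Proposition~\ref{prop:partial}.

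\textbf{Case $3\le m\le n-1$.} By Proposition~\ref{prop:partial}(i), the $\omega^{n-2}$-coefficient is
\[
-a^n\sum_{\ell=1}^{m}T(m,\ell)\,a^{2(\ell-1)}.
\]
Each $T(m,\ell)=\binom{m-1}{\ell-1}\binom{m}{\ell}$ is a positive integer, and the $\ell=1$ term equals $m\ge 3$. So for $a>0$ the sum is at least $m>0$. Vanishing of the coefficient then forces $a^n=0$, contradicting $a>0$.

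\textbf{Case $m=2$.} Both \eqref{eq:partial-coef-ii-P} and \eqref{eq:partial-coef-ii-Q} must vanish. From \eqref{eq:partial-coef-ii-P} with $a>0$ we get
\[
a^4+2a^2-4\lambda^2=0.
\]
Substituting this into \eqref{eq:partial-coef-ii-Q} leaves $-2\lambda a^{n-3}\cdot 2=-4\lambda a^{n-3}$. This must be zero, but $\lambda>0$ and $a>0$ make it nonzero. This is a contradiction.

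The main obstacle is the initial reduction to a disk centered at $a$. If the center were $0$, Proposition~\ref{prop:partial} (computed with $A-aI$) would not apply directly. I expect to dispose of that case by citing \cite{GauWu04} as above, noting that it applies to any matrix whose numerical range is a disk centered at $0$ when $A$ is a companion matrix. The rest is immediate sign analysis from the closed forms already established.
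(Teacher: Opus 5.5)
Your proof is correct and follows the paper's argument step for step: restrict the center to $\{0,a\}$, dispose of center $0$ via \cite{GauWu04}, rotate to $a>0$ (with $\lambda>0$ because a companion matrix of size $n>3$ is not scalar), and then derive contradictions from Proposition~\ref{prop:partial}(i), where the sum is at least $T(m,1)=m$, and from Proposition~\ref{prop:partial}(ii), where substituting $a^4+2a^2=4\lambda^2$ leaves $-4\lambda a^{n-3}=0$. The differences are cosmetic: you use only the fact that the center is an eigenvalue, which suffices because $\sigma(A)=\{0,a\}$ and \cite{GauWu04} handles center $0$ regardless of multiplicity, whereas the paper invokes the stronger multiplicity statement of \cite{Wu11,CheungLi13}; your aside about $m=1$ is unnecessary under the hypothesis $m\ge 2$.
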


\begin{proof}
By \cite{Wu11,CheungLi13}, the center $c$ of $W(A)$ is an eigenvalue of $A$ whose algebraic multiplicity 
is strictly greater than its geometric multiplicity. Since a companion matrix 
is nonderogatory, each eigenvalue has geometric multiplicity one, so $c$ has 
algebraic multiplicity at least two. The spectrum $\{0^{\,n-m},a^m\}$ with 
$2\le m\le n-1$ therefore restricts $c$ to $\{0,a\}$.

If $c=0$, then $W(A)$ is a circular disk centered at the origin. By \cite{GauWu04}, a companion matrix with such a numerical 
range must equal the Jordan block $J_n$. All its eigenvalues are therefore $0$, so $a=0$.

It remains to consider $c=a$. By the rotation reduction established in Section~\ref{sec:intro}, 
we may assume $a\ge 0$. Suppose for contradiction that $a>0$. Since $A$ is a companion matrix of size $n>3$, 
it is not a scalar multiple of the identity, hence $W(A)$ cannot reduce to a single point; 
the disk's radius $\lambda$ is therefore strictly positive. 
Since $\det(A_\omega)=0$ for every $|\omega|=1$, each Laurent coefficient of $\det(A_\omega)$ vanishes.

\medskip\noindent\textbf{Case~(i): $3\le m\le n-1$.} 
By Proposition~\ref{prop:partial}(i), the $\omega^{n-2}$-coefficient of $\det(A_\omega)$ is \eqref{eq:partial-coef-i}, 
and since $\det(A_\omega)=0$, we know that
\[
-a^n\left(\sum_{\ell=1}^{m}T(m,\ell)\,a^{2(\ell-1)}\right) = 0.
\]
Since $a>0$ and each $T(m,\ell)$, defined in \eqref{eq:cd-def}, is a strictly positive integer, 
the sum is bounded below by $T(m,1) = m\ge 3>0$. Thus the left-hand side cannot vanish, a contradiction.

\medskip\noindent\textbf{Case~(ii): $m=2$.} By Proposition~\ref{prop:partial}(ii), 
\eqref{eq:partial-coef-ii-P} and \eqref{eq:partial-coef-ii-Q} both vanish:
\[
-a^{n-2}(a^{4}+2a^{2}-4\lambda^{2})=0 \quad\text{and}\quad 
-2a^{n-3}\lambda\bigl((n-2)(a^{4}+2a^{2}-4\lambda^{2})+2\bigr)=0.
\]
The first equation, combined with $a>0$, gives $a^{4}+2a^{2}-4\lambda^{2}=0$. Substituting this into the 
second equation gives
\[
-4 a^{n-3}\lambda=0,
\]
which contradicts $a>0$ and $\lambda>0$.

Thus $a=0$ in all cases.\qedhere
\end{proof}

\begin{remark}[The Case $m=1$]\label{rem:m1}
When $m=1$, the spectrum is $\sigma(A)=\{0^{\,n-1},a\}$.
If $a=0$, there is nothing to prove. If $a\ne0$, then $a$ is a simple eigenvalue.
As noted at the start of this section, $a$ cannot be the center of $W(A)$ in
this case, by \cite{Wu11}, since the center of a circular numerical range must
be an eigenvalue whose algebraic multiplicity exceeds its geometric
multiplicity (here equal to $1$ for $a$, since $A$ is nonderogatory).
Hence the center must be $0$, and \cite{GauWu04} again forces $A=J_n$, so
$a=0$. Together with Theorem~\ref{thm:partial}, this shows that $a=0$ for
\emph{every} spectrum of the form $\{0^{\,n-m},a^m\}$ with $1\le m\le n-1$.
\end{remark}

\begin{remark}\label{rem:partial-vs-single}
Theorem~\ref{thm:partial} differs from Theorem~\ref{thm:main} in two structural ways.

First, for $3\le m\le n-1$, the truncation $a_j=0$ for $j>m$ replaces the binomial base $n$ 
in Lemma~\ref{lem:altsum-shifted} by $m$. 
As a result, the $\omega^{n-2}$-coefficient in the partial case takes a closed form whose
binomial factor depends only on $m$, apart from the global factor $a^n$, with no branching
by the parity of $n$; Lemma~\ref{lem:parity} therefore plays no role here.

Second, this dependence on $m$ no longer holds at $m=2$: 
the $\lambda^2$-cancellation used in the proof of Proposition~\ref{prop:partial}(i) 
requires the hypothesis $r<m$ in Lemma~\ref{lem:altsum-shifted} to hold for every $r\in\{0,1,2\}$, 
which fails at $r=m=2$. Indeed, the alternating sum then reduces to its $k=m$ term, 
$\binom{m}{m}\binom{m}{2}=1\ne 0$. A $\lambda^2$-term then remains at $\omega^{n-2}$, 
and the $\omega^{n-3}$-coefficient is needed to eliminate $\lambda$.
\end{remark}

\appendix

\section{Auxiliary Binomial Identities}\label{app:identities}

This appendix collects the auxiliary binomial identities used throughout the paper.

\begin{lemma}\label{lem:altsum}
For every integer $n\ge 1$,
\begin{equation}\label{eq:altsum-1}
\sum_{k=0}^{n}(-1)^{k}\binom{n}{k}=0,
\end{equation}
and for every integer $n\ge 2$,
\begin{equation}\label{eq:altsum-2}
\sum_{k=0}^{n}(-1)^{k}k\binom{n}{k}=0.
\end{equation}
Multiplying \eqref{eq:altsum-1} and \eqref{eq:altsum-2} through by $(-1)^n$ and isolating the $k=n$ term 
(which equals $1$ and $n$ respectively after the multiplication), we get
\begin{equation}\label{eq:altsum-1-2}
    \sum_{k=0}^{n-1}(-1)^{n-k}\binom{n}{k}=-1\quad\text{and}\quad
    \sum_{k=0}^{n-1}(-1)^{n-k}k\binom{n}{k}=-n.
\end{equation}
\end{lemma}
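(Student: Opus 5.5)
The plan is to read both identities off the binomial expansion of $(1+x)^n$ and its derivative, evaluated at $x=-1$; the displayed consequences \eqref{eq:altsum-1-2} then follow by isolating the top term.

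\textbf{Step 1: the identity \eqref{eq:altsum-1}.} By the binomial theorem,
\[
(1+x)^n=\sum_{k=0}^{n}\binom{n}{k}x^k .
\]
Setting $x=-1$ gives $\sum_{k=0}^{n}(-1)^k\binom{n}{k}=0^n$. This equals $0$ precisely because $n\ge 1$.

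\textbf{Step 2: the identity \eqref{eq:altsum-2}.} Differentiate the same expansion and multiply by $x$:
\[
nx(1+x)^{n-1}=\sum_{k=0}^{n}k\binom{n}{k}x^k .
\]
At $x=-1$ the left side is $-n\cdot 0^{n-1}$, which vanishes when $n\ge 2$. Alternatively, one can use $k\binom{n}{k}=n\binom{n-1}{k-1}$ to reduce to \eqref{eq:altsum-1} with $n-1\ge 1$ in place of $n$.

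\textbf{Step 3: the consequences \eqref{eq:altsum-1-2}.} Multiply each identity by $(-1)^n$, so that the general terms become $(-1)^{n-k}\binom{n}{k}$ and $(-1)^{n-k}k\binom{n}{k}$. The $k=n$ terms are then $1$ and $n$, respectively. Moving them to the right-hand side gives $-1$ and $-n$.

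There is no real obstacle. The only point that needs care is the range hypotheses: $n\ge1$ for the first identity and $n\ge2$ for the second, since $0^0=1$ would break the argument otherwise. For instance, at $n=1$ the second sum equals $-1$, not $0$.
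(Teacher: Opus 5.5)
Your proposal is correct and essentially matches the paper's proof: the first identity is $(1-1)^n=0$, and your alternative argument for the second via $k\binom{n}{k}=n\binom{n-1}{k-1}$ is exactly the paper's reduction to the first identity with $n-1\ge 1$ in place of $n$ (the derivative of $(1+x)^n$ repackages the same fact). Your Step 3 and your remarks on the hypotheses $n\ge 1$ and $n\ge 2$, including the $n=1$ counterexample value $-1$, are also correct.
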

\begin{proof}
Identity \eqref{eq:altsum-1} is the binomial expansion of $(1-1)^{n}=0$. 
For \eqref{eq:altsum-2}, since $k\binom{n}{k}=n\binom{n-1}{k-1}$,
\[
\sum_{k=0}^{n}(-1)^{k}k\binom{n}{k}
=n\left(\sum_{k=1}^{n}(-1)^{k}\binom{n-1}{k-1}\right)
=-n\left(\sum_{j=0}^{n-1}(-1)^{j}\binom{n-1}{j}\right)=0
\]
by \eqref{eq:altsum-1} applied with $n$ replaced by $n-1\ge 1$. \qedhere
\end{proof}

\begin{lemma}\label{lem:altsum-shifted}
For integers $n\ge 1$ and $0\le r<n$,
\begin{equation}\label{eq:altsum-shifted}
\sum_{k=0}^{n}(-1)^{n-k}\binom{n}{k}\binom{k}{r}=0.
\end{equation}
\end{lemma}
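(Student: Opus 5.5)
The plan is to rewrite the summand with the subset-of-a-subset identity and then collapse the sum to a single vanishing alternating binomial sum. Throughout, $\binom{k}{r}=0$ for $k<r$.

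First, for $r\le k\le n$ we have
\[
\binom{n}{k}\binom{k}{r}=\binom{n}{r}\binom{n-r}{k-r}.
\]
This is the same identity used in the proof of Proposition~\ref{prop:Di-relation}(ii). Substituting it into \eqref{eq:altsum-shifted} and reindexing with $j=k-r$, the left-hand side becomes
\[
\binom{n}{r}\sum_{j=0}^{n-r}(-1)^{n-r-j}\binom{n-r}{j}=\binom{n}{r}(1-1)^{n-r}.
\]

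Second, the hypothesis $0\le r<n$ gives $n-r\ge 1$. Lemma~\ref{lem:altsum}, in the form \eqref{eq:altsum-1} applied with $n$ replaced by $n-r$, therefore shows that the inner sum vanishes, and the whole expression is $0$.

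As an alternative check, the identity also follows from generating functions. The sum is the coefficient of $x^r$ in $\sum_k(-1)^{n-k}\binom{n}{k}(1+x)^k=x^n$, and that coefficient is zero for $r<n$.

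There is no real obstacle here. The only point needing care is the boundary range $k<r$, where the summand vanishes, so the reindexing to $j\ge 0$ loses nothing.
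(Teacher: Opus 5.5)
Your proof is correct and matches the paper's argument: rewrite the summand via $\binom{n}{k}\binom{k}{r}=\binom{n}{r}\binom{n-r}{k-r}$, reindex with $j=k-r$, and apply \eqref{eq:altsum-1} with $n-r\ge 1$. Your generating-function check ($\sum_k(-1)^{n-k}\binom{n}{k}(1+x)^k=x^n$) is also valid, and the elementary identity you borrow from Proposition~\ref{prop:Di-relation}(ii) creates no circularity.
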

\begin{proof}
Since $\binom{n}{k}\binom{k}{r}=\binom{n}{r}\binom{n-r}{k-r}$, the sum equals
\[
\binom{n}{r}\sum_{k=r}^{n}(-1)^{n-k}\binom{n-r}{k-r}
=\binom{n}{r}(-1)^{n-r}\left(\sum_{j=0}^{n-r}(-1)^{j}\binom{n-r}{j}\right) = 0,
\]
where the last equality is \eqref{eq:altsum-1} with $n$ replaced by $n-r\ge 1$.\qedhere
\end{proof}

\begin{lemma}\label{lem:D4-odd-aux}
For integers $n\ge 2$ and $0\le k\le n$,
\begin{align}
\sum_{\ell=0}^{k}(-1)^\ell\binom{n}{\ell}
&= (-1)^k\binom{n-1}{k}, \label{eq:partial-altsum}\\
\sum_{\ell=0}^{k}\ell\,(-1)^{\ell}\binom{n}{\ell}
&= (-1)^{k}\,n\binom{n-2}{k-1}, \label{eq:partial-altsum-firstmoment}
\end{align}
with the convention $\binom{n-2}{-1}:=0$ so that
\eqref{eq:partial-altsum-firstmoment} also holds at $k=0$.
Moreover, for $2\le k\le n$,
\begin{equation}\label{eq:D4-binomial-reduction}
n\binom{n-2}{k-1}+(k-2)\binom{n-1}{k-1} = (n-1)\binom{n-1}{k-1}-\binom{n-2}{k-2}.
\end{equation}
\end{lemma}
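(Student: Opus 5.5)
We need to prove Lemma \ref{lem:D4-odd-aux}: the two partial alternating sums and the binomial reduction \eqref{eq:D4-binomial-reduction}. The plan is to prove \eqref{eq:partial-altsum} first by induction on $k$, using Pascal's rule. Then \eqref{eq:partial-altsum-firstmoment} is reduced to \eqref{eq:partial-altsum} with base $n-1$. Finally \eqref{eq:D4-binomial-reduction} is verified by direct manipulation of binomial coefficients.

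For \eqref{eq:partial-altsum}, the case $k=0$ reads $1=\binom{n-1}{0}$. For the inductive step, assume the identity at $k-1$. Then
\[
\sum_{\ell=0}^{k}(-1)^\ell\binom{n}{\ell}=(-1)^{k-1}\binom{n-1}{k-1}+(-1)^k\binom{n}{k}=(-1)^k\Bigl(\binom{n}{k}-\binom{n-1}{k-1}\Bigr)=(-1)^k\binom{n-1}{k}.
\]
Equivalently, the sum telescopes once one writes $\binom{n}{\ell}=\binom{n-1}{\ell}+\binom{n-1}{\ell-1}$. This holds for all $0\le k\le n$, with $\binom{n-1}{n}=0$ at the top.

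For \eqref{eq:partial-altsum-firstmoment}, use $\ell\binom{n}{\ell}=n\binom{n-1}{\ell-1}$ and shift the index to $j=\ell-1$:
\[
\sum_{\ell=0}^{k}\ell(-1)^\ell\binom{n}{\ell}=-n\sum_{j=0}^{k-1}(-1)^j\binom{n-1}{j}.
\]
Apply \eqref{eq:partial-altsum} with $n$ replaced by $n-1\ge1$ and $k$ replaced by $k-1$; this is valid for $1\le k\le n$. It gives
\[
-n(-1)^{k-1}\binom{n-2}{k-1}=(-1)^k n\binom{n-2}{k-1}.
\]
The case $k=0$ is covered by the empty-sum convention $\binom{n-2}{-1}=0$. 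The one point needing care is checking that the proof of \eqref{eq:partial-altsum} remains valid for base $n-1=1$, i.e.\ $n=2$. It does, because that proof used only Pascal's rule.

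For \eqref{eq:D4-binomial-reduction}, substitute $\binom{n-1}{k-1}=\binom{n-2}{k-1}+\binom{n-2}{k-2}$ into the right-hand side:
\[
(n-1)\binom{n-2}{k-1}+(n-2)\binom{n-2}{k-2}.
\]
Subtracting the left-hand side and expanding $(k-2)\binom{n-1}{k-1}$ the same way, the claim reduces to
\[
(n-k)\binom{n-2}{k-2}=(k-1)\binom{n-2}{k-1}.
\]
This is the standard ratio $\binom{n-2}{k-1}/\binom{n-2}{k-2}=(n-k)/(k-1)$; multiplying out, both sides equal $\frac{(n-2)!}{(k-2)!\,(n-k-1)!}$ when $k<n$. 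At $k=n$ both sides vanish, since $\binom{n-2}{n-1}=0$ and $n-k=0$.

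None of these steps is a real obstacle; the only delicate point is the boundary cases ($k=0$, $k=n$, $n=2$), which I would check explicitly.
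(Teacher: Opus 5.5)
Your proposal is correct and matches the paper's proof essentially step for step. Both use Pascal telescoping for \eqref{eq:partial-altsum}, and both use the absorption $\ell\binom{n}{\ell}=n\binom{n-1}{\ell-1}$ with an index shift for \eqref{eq:partial-altsum-firstmoment}. Both also reduce \eqref{eq:D4-binomial-reduction} via Pascal's rule to $(k-1)\binom{n-2}{k-1}=(n-k)\binom{n-2}{k-2}$, with $k=n$ checked separately. One small addition on your side: you note explicitly that for $n=2$ the identity \eqref{eq:partial-altsum} is used with base $n-1=1$, outside the stated range $n\ge 2$ but harmless since only Pascal's rule is used; the paper leaves this implicit.
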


\begin{proof}
By Pascal's identity $\binom{n}{\ell}=\binom{n-1}{\ell}+\binom{n-1}{\ell-1}$,
shifting the index $i=\ell-1$ in the second sum,
\begin{align*}
\sum_{\ell=0}^{k}(-1)^{\ell}\binom{n}{\ell} 
&= \sum_{\ell=0}^{k}(-1)^{\ell}\binom{n-1}{\ell}
  +\sum_{\ell=0}^{k}(-1)^{\ell}\binom{n-1}{\ell-1}\\
&= \sum_{\ell=0}^{k}(-1)^{\ell}\binom{n-1}{\ell}
  -\sum_{i=0}^{k-1}(-1)^{i}\binom{n-1}{i}
 = (-1)^k\binom{n-1}{k},
\end{align*}
where the last equality follows since the terms $\ell=0,\ldots,k-1$ cancel, 
leaving only the $\ell=k$ term.

For \eqref{eq:partial-altsum-firstmoment}, the case $k=0$ holds by the 
convention $\binom{n-2}{-1}=0$ (both sides vanish). 
For $1\le k\le n$, since $\ell\binom{n}{\ell}=n\binom{n-1}{\ell-1}$, the index shift $j=\ell-1$ gives
\begin{align*}
\sum_{\ell=0}^{k}\,(-1)^\ell\ell\binom{n}{\ell} &= n\sum_{\ell=1}^{k}(-1)^\ell\binom{n-1}{\ell-1}
= -n\sum_{j=0}^{k-1}(-1)^{j}\binom{n-1}{j} = (-1)^{k}n\binom{n-2}{k-1}.
\end{align*}

It remains to prove \eqref{eq:D4-binomial-reduction}. The identity
\begin{equation}\label{eq:absorb-aux}
(k-1)\binom{n-2}{k-1}=(n-k)\binom{n-2}{k-2}
\end{equation}
holds for $2\le k\le n$: for $2\le k\le n-1$, both sides equal $\tfrac{(n-2)!}{(k-2)!\,(n-k-1)!}$; 
for $k=n$, both sides vanish since $\binom{n-2}{n-1}=0$ and $n-k=0$.
Expanding the left-hand side of \eqref{eq:D4-binomial-reduction} via 
Pascal's identity $\binom{n-1}{k-1}=\binom{n-2}{k-1}+\binom{n-2}{k-2}$,
\begin{align*}
n\binom{n-2}{k-1}+(k-2)\binom{n-1}{k-1}
&= (n+k-2)\binom{n-2}{k-1}+(k-2)\binom{n-2}{k-2}\\
&= (n-1)\binom{n-2}{k-1}+(k-1)\binom{n-2}{k-1}+(k-2)\binom{n-2}{k-2}\\
&\stackrel{\eqref{eq:absorb-aux}}{=}
   (n-1)\binom{n-2}{k-1}+(n-2)\binom{n-2}{k-2}\\
&= (n-1)\Bigl(\binom{n-2}{k-1}+\binom{n-2}{k-2}\Bigr)-\binom{n-2}{k-2}\\
&= (n-1)\binom{n-1}{k-1}-\binom{n-2}{k-2},
\end{align*}
where the last equality is Pascal's identity 
$\binom{n-1}{k-1}=\binom{n-2}{k-1}+\binom{n-2}{k-2}$.\qedhere
\end{proof}

\bibliographystyle{plain}
\bibliography{Circular_Ref}

@article{Calbeck08,
  author    = {Calbeck, William},
  title     = {Elliptic numerical ranges of $3\times 3$ companion matrices},
  journal   = {Linear Algebra Appl.},
  volume    = {428},
  year      = {2008},
  pages     = {2715--2722}
}

@article{ChenLouck96,
  author    = {Chen, William Y. C. and Louck, James D.},
  title     = {The combinatorial power of the companion matrix},
  journal   = {Linear Algebra Appl.},
  volume    = {232},
  year      = {1996},
  pages     = {261--278}
}

@article{CheungLi13,
  author    = {Cheung, Wai-Shun and Li, Chi-Kwong},
  title     = {Elementary proofs for some results on the circular symmetry of the numerical range},
  journal   = {Linear Multilinear Algebra},
  volume    = {61},
  year      = {2013},
  pages     = {643--647}
}

@article{daFonPetro01,
  author    = {da Fonseca, Carlos M. and Petronilho, Jos{\'e} C.},
  title     = {Explicit inverses of some tridiagonal matrices},
  journal   = {Linear Algebra Appl.},
  volume    = {325},
  year      = {2001},
  pages     = {7--21}
}

@article{GauWu04,
  author    = {Gau, Hwa-Long and Wu, Pei Yuan},
  title     = {Companion matrices: reducibility, numerical ranges and similarity to contractions},
  journal   = {Linear Algebra Appl.},
  volume    = {383},
  year      = {2004},
  pages     = {127--142}
}

@book{GauWu21,
  author    = {Gau, Hwa-Long and Wu, Pei Yuan},
  title     = {Numerical Ranges of Hilbert Space Operators},
  publisher = {Cambridge University Press},
  address   = {New York},
  year      = {2021}
}

@book{Harville97,
  author    = {Harville, David A.},
  title     = {Matrix Algebra From a Statistician's Perspective},
  publisher = {Springer-Verlag},
  address   = {New York},
  year      = {1997}
}

@book{HornJohnson91,
  author    = {Horn, Roger A. and Johnson, Charles R.},
  title     = {Topics in Matrix Analysis},
  publisher = {Cambridge University Press},
  address   = {Cambridge},
  year      = {1991}
}

@article{Kippenhahn51,
  author = {Kippenhahn, R.},
  title  = {{\"U}ber den {Wertevorrat} einer {Matrix}},
  journal= {Math. Nachr.},
  volume = {6},
  year   = {1951},
  pages  = {193--228},
  note   = {English translation by P. F. Zachlin and M. E. Hochstenbach: 
            \emph{On the numerical range of a matrix}}
}

@book{MasonHandscomb2002,
  author    = {Mason, John C. and Handscomb, David C.},
  title     = {Chebyshev Polynomials},
  publisher = {CRC Press},
  address   = {Boca Raton, FL},
  year      = {2002}
}

@misc{OEIS103371,
author       = {{OEIS Foundation Inc.}},
title        = {{The On-Line Encyclopedia of Integer Sequences}},
howpublished = {\url{https://oeis.org/A103371}},
note         = {Sequence {A103371}},
}

@article{Wu11,
  author    = {Wu, Pei Yuan},
  title     = {Numerical ranges as circular discs},
  journal   = {Appl. Math. Lett.},
  volume    = {24},
  year      = {2011},
  pages     = {2115--2117}
}

\end{document}